\providecommand{\keywords}[1]{\textbf{\textit{Keywords.}} #1}
\providecommand{\AMSclass}[1]{\textbf{\textit{AMS classification.}} #1}
\title{Primitive elements of a connected free bialgebra}
\date{}
\author{Lo\"\i c Foissy}
\affil{\small{Univ. Littoral Côte d'Opale, UR 2597
LMPA, Laboratoire de Mathématiques Pures et Appliquées Joseph Liouville
F-62100 Calais, France}.\\ Email: \texttt{foissy@univ-littoral.fr}}
\theoremstyle{plain}
\newtheorem{theo}{Theorem}[section]
\newtheorem{lemma}[theo]{Lemma}
\newtheorem{cor}[theo]{Corollary}
\newtheorem{prop}[theo]{Proposition}
\newtheorem{defi}[theo]{Definition}
\theoremstyle{remark}
\newtheorem{remark}{Remark}[section]
\newtheorem{notation}{Notations}[section]
\newtheorem{example}{Example}[section]
\newcommand{\K}{\mathbb{K}}
\newcommand{\N}{\mathbb{N}}
\newcommand{\g}{\mathfrak{g}}
\newcommand{\h}{\mathfrak{h}}
\newcommand{\prim}{\mathrm{Prim}}
\newcommand{\gr}{\mathrm{Gr}}
\newcommand{\gs}{\mathbf{GS}}
\newcommand{\gfs}{\mathbf{GFS}}
\newcommand{\id}{\mathrm{Id}}
\newcommand{\calU}{\mathcal{U}}
\newcommand{\im}{\mathrm{Im}}
\renewcommand{\ker}{\mathrm{Ker}}
\begin{document}

\maketitle

\begin{abstract}
We prove that the Lie algebra of primitive elements of a graded and connected bialgebra, free as an associative algebra, over a field of characteristic zero,
is a free Lie algebra. The main tool is a filtration, which allows to embed the associated graded Lie algebra
into the Lie algebra of a free and cocommutative bialgebra. The result is then a consequence of Cartier--Quillen--Milnor--Moore's  Shirshov--Witt's theorems.
\end{abstract}

\keywords{Graded Hopf algebras; free Lie algebras; primitive elements}

\AMSclass{16T05 16T10 17B01}

\tableofcontents

\section*{Introduction}

In the literature, several combinatorial bialgebras are known to be free as algebras: some in a direct way, by their very definition: for example, the bialgebra of noncommutative 
symmetric functions \cite{Hazewinkel2005,Gelfand1995}, bitensorial bialgebras \cite{Manchon1997}, bialgebras of planar rooted trees \cite{Foissy1,Foissy2}, 
of finite topologies, quasi-posets or double posets \cite{Foissy18,Foissy20,Foissy27,Malvenuto2011}, of graphs \cite{Foissy25,Holtkamp2003}$\ldots$; other ones in a more hidden way, 
often involving a notion of indecomposable objects:
the Loday--Ronco bialgebra of planar trees \cite{Aguiar2006,Loday1998,Ronco2000}, the Malvenuto--Reutenauer bialgebra of permutations \cite{Malvenuto1995}, 
the bialgebra of packed words \cite{Novelli2006-2} or of parking functions \cite{Foissy17,Novelli2006,Novelli2007}, and variants of them \cite{Aguiar2008,Bergeron2023,Catoire2022}, etc. 
To any of these bialgebras $H$ is attached the Lie algebra of its primitive elements $\prim(H)$, which structure is not so well-known in general.  
We proved in \cite{Foissy2} that the Lie algebra of primitive elements of the bialgebra of planar trees (and of decorated versions of it) is free. Using various isomorphisms,
this implies that this also the case for the Loday--Ronco or the Malvenuto--Reutenauer bialgebra \cite{Foissy2,Foissy5,Foissy15}. \\

Our aim here is to prove the following result:\\

\textbf{Main theorem (Theorem \ref{theo4.1})}. \emph{Let $H=(H,m,\Delta)$ be a graded, connected bialgebra over a field of characteristic zero, free as an algebra. 
Then the Lie algebra $\prim(H)$ is free.}\\ 

Here, graded means that the bialgebra $H$ is decomposed into a direct sum of vector spaces $H_n$, with $n\in \N$, such that:
\begin{enumerate}
\item For any $k,l\in \N$, $m(H_k\otimes H_l)\subseteq H_{k+l}$.
\item For any $n\in \N$, $\displaystyle \Delta(H_n)\subseteq \bigoplus_{p=0}^n H_p\otimes H_{n-p}$.
\end{enumerate}
Connected means that $H_0$ is 1-dimensional, generated by $1_H$. We do not assume that the subspaces $H_n$ are finite-dimensional.\\

A noticeable point is that all the bialgebras we consider here are in fact Hopf algebras, because of the connectedness condition. However, as the antipode won't play any role,
we chose to mention them everywhere as bialgebras and not Hopf algebras. \\

Here is an idea of the proof of this theorem.
When $H$ is cocommutative, Cartier-Quillen-Milnor-Moore's theorem implies that $H$ is the enveloping algebra $\calU(\g)$ of the Lie algebra of its primitive elements $\g$.
A classical result using the universal properties of $H$ as a free algebra and as an enveloping algebra proves that $\g$ is a free Lie algebra, see Proposition \ref{prop2.4}.
The situation is more complicated when $H$ is not cocommutative.  We shall use the notion of graded and filtered bialgebra (Definition \ref{defi1.7}).
To such an object $H$ is associated another graded bialgebra $\gr(H)$, by a functor from the category of graded filtered spaces to the category of filtered spaces. 
When $H$ is graded and connected, we can provide it the filtration given by the powers of the augmentation ideal of $H$, that is to say the kernel of the counit of $H$ (Proposition \ref{prop3.1}).
We prove that when $H$ is a free algebra, $\gr(H)$ is also free and cocommutative (Propositions \ref{prop3.2} and \ref{prop3.3}), 
which implies that the Lie algebra $\prim(\gr(H))$ is free as a Lie algebra. It turns out that the Lie algebra $\prim(H)$ is also filtered and 
that the associated graded Lie algebra $\gr(\prim(H))$ is isomorphic to a Lie subalgebra of $\prim(\gr(H))$: By Shirshov--Witt's theorem \cite{Shirshov53,Witt56}, 
$\gr(\prim(H))$ is a free Lie algebra. Then, coming back to $\prim(H)$, a reasoning using both the graduation and the filtration proves that $\prim(H)$ is also free. \\

The paper is structured in the following way:
the first section introduces graded filtered spaces (Definition \ref{defi1.5}), the functor $\gr$ from graded filtered spaces to graded spaces (Definition \ref{defi1.8} and Proposition \ref{prop1.9}) 
and proves that it is compatible with the tensor product (Proposition \ref{prop1.11}). The second section is devoted to various results on free graded algebras and enveloping algebras. 
Some of the results exposed there are rather classical (Propositions \ref{prop2.3} and \ref{prop2.4}, for example), but we choose to propose a proof for the sake of completeness.
In the third section, we introduce the counital filtration of any graded connected bialgebra (Proposition \ref{prop3.1}). 
We prove that the functor $\gr$ applied to this filtration preserves the freeness (Proposition \ref{prop3.2}), but changes the coproduct into a cocommutative one (Proposition \ref{prop3.3}). 
We also provs in this section several results on the Lie algebra of primitive elements of the associated graded bialgebra (Lemma \ref{lem3.4} and Proposition \ref{prop3.5}). 
Then, all the preliminary results are ready to prove the main theorem (Theorem \ref{theo4.1}), which proof is the object of the last section. \\

\textbf{Acknowledgements}. 
The author acknowledges support from the grant ANR-20-CE40-0007 \emph{Combinatoire Algébrique, Résurgence, Probabilités Libres et Opérades}.\\ 

\begin{notation}
$\K$ is a field of characteristic zero. All the vector spaces in this text will be taken over $\K$.
We refer to \cite{Abe1980,Cartier2021,Kashaev2023,Sweedler1969} for classical results and notations on Hopf algebras and bialgebras. 
\end{notation}

\section{Filtered graded objects}

\subsection{Graded spaces}

\begin{defi}
\begin{enumerate}
\item A graded space is a pair $V=(V,(V_n)_{n\in \N})$ such that $V$ is a vector space and $V_n$ is a subspace of $V$ for any $n\in \N$, with
\[V=\bigoplus_{n\in \N} V_n.\]
\item Let $V$ and $W$ be two graded spaces. A graded morphism $\phi$ from $V$ to $W$ is a linear map $\phi:V\longrightarrow W$ such that for any $n\in \N$, $\phi(V_n)\subseteq W_n$.
\end{enumerate}
\end{defi}

With these notions, graded spaces form a category denoted by $\gs$. 

\begin{prop}
Let $V$ and $W$ be two graded spaces. Then the vector space $V\otimes W$ is graded with, for any $n\in \N$,
\begin{align*}
(V\otimes W)_n&=\bigoplus_{i=0}^n V_i \otimes W_{n-i}.
\end{align*}
\end{prop}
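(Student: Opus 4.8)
The plan is to reduce everything to the standard fact that the tensor product functor commutes with arbitrary direct sums. First I would write $V=\bigoplus_{i\in\N}V_i$ and $W=\bigoplus_{j\in\N}W_j$ and invoke the canonical isomorphism
\[
V\otimes W\;\cong\;\Bigl(\bigoplus_{i\in\N}V_i\Bigr)\otimes\Bigl(\bigoplus_{j\in\N}W_j\Bigr)\;\cong\;\bigoplus_{(i,j)\in\N^2}V_i\otimes W_j,
\]
where each $V_i\otimes W_j$ is identified with its image in $V\otimes W$ under the map induced by the inclusions $V_i\hookrightarrow V$ and $W_j\hookrightarrow W$. In particular $V\otimes W$ is the (direct) sum of the subspaces $V_i\otimes W_j$.

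Then I would merely reindex this direct sum over $\N^2$ according to the value of $n=i+j$:
\[
\bigoplus_{(i,j)\in\N^2}V_i\otimes W_j\;=\;\bigoplus_{n\in\N}\Bigl(\bigoplus_{\substack{(i,j)\in\N^2\\ i+j=n}}V_i\otimes W_j\Bigr)\;=\;\bigoplus_{n\in\N}\Bigl(\bigoplus_{i=0}^n V_i\otimes W_{n-i}\Bigr),
\]
the last equality being just the remark that the pairs $(i,j)\in\N^2$ with $i+j=n$ are exactly the pairs $(i,n-i)$ for $0\le i\le n$. This displays $V\otimes W$ as the direct sum of the subspaces $(V\otimes W)_n=\bigoplus_{i=0}^n V_i\otimes W_{n-i}$, which is the claim.

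There is essentially no obstacle; the only point deserving a word of care is the justification that the internal sum $\sum_{i,j}V_i\otimes W_j$ inside $V\otimes W$ is genuinely direct, i.e. that the natural map $\bigoplus_{i,j}V_i\otimes W_j\to V\otimes W$ is an isomorphism and not merely surjective. I would either quote this as the standard compatibility of $\otimes$ with direct sums, or prove it directly: for fixed $i$ and $j$ choose complements $V=V_i\oplus V_i'$ and $W=W_j\oplus W_j'$, so that $V\otimes W=(V_i\otimes W_j)\oplus(V_i\otimes W_j')\oplus(V_i'\otimes W_j)\oplus(V_i'\otimes W_j')$, which already shows $V_i\otimes W_j\to V\otimes W$ is injective; the same bookkeeping carried out simultaneously over all indices yields directness of the whole family, and hence the desired grading.
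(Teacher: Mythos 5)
Your argument is correct and is exactly the routine verification the paper leaves implicit (its proof is simply ``Immediate verifications''): tensor product commutes with direct sums, then reindex the sum over $(i,j)\in\N^2$ by $n=i+j$. Nothing further is needed.
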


\begin{proof}
Immediate verifications.
\end{proof}

One can easily verify that with this tensor product, $\gs$ is a symmetric monoidal category. It is then possible to consider algebras, Lie algebras, coalgebras, bialgebras$\ldots$
in this category. Let us explicit these notions:

\begin{defi} 
\begin{enumerate}
\item A graded algebra is an algebra $(A,m)$ together with a graduation $(A_n)_{n\in \N}$, such that $1_A\in A_0$ and for any $m,n\in \N$,
\[m(A_m\otimes A_n)\subseteq A_{m+n}.\]
\item A graded coalgebra is a coalgebra $(C,\Delta)$ together with a graduation $(C_n)_{n\in \N}$ such that for any $n\geq 1$, $\varepsilon_C(C_n)=(0)$
and for any $n\in \N$, 
\[\Delta(C_n)\subseteq \sum_{i=0}^n C_i\otimes C_{n-i}.\]
\item A graded bialgebra is a bialgebra $(H,m,\Delta)$ together with a graduation $(H_n)_{n\in \N}$, making it both a graded algebra and a graded coalgebra.
We shall say that $H$ is connected if $H_0$ is 1-dimensional (and so generated as a vector spaces by the unit $1_H$ of $H$).
\item A graded Lie algebra is a Lie algebra $(\g,[-,-])$ together with a graduation $(\g_n)_{n\in \N}$ such that for any $m,n\in \N$,
\[[\g_m,\g_n]\subseteq \g_{m+n}.\]
We shall say that it is connected if $\g_0=(0)$.
\end{enumerate}
\end{defi}

\begin{example}\begin{enumerate}
\item  The Lie algebra $\prim(H)$ of primitive elements of a connected bialgebra $H$ is a graded Lie algebra, with, for any $n\in \N$,
\[\prim(H)_n=\prim(H)\cap H_n.\]
If $H$ is connected, then $\prim(H)$ is connected.
\item If $\g$ is a graded Lie algebra, then its enveloping algebra $\cal(\g)$ is a graded bialgebra, with the graduation given by
\[\calU(\g)_n=\mathrm{Vect}(x_1\ldots x_p\mid p\geq 0,\: x_i\in \g_{n_i},\:n_1+\ldots+n_p=n).\]
Moreover, $\calU(\g)$ is connected if and only if, $\g$ is connected. 
\end{enumerate}\end{example}

We recall this important result \cite{Cartier2021,MilnorMoore65}, called Cartier--Quillen--Milnor--Moore's theorem:

\begin{theo}
Let $H$ be a graded, connected, cocommutative bialgebra. Then, as a graded bialgebra, $H$ is isomorphic to the enveloping algebra of the Lie algebra $\prim(H)$ of its primitive elements. 
\end{theo}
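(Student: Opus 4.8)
The plan is to build the canonical comparison morphism and then show it is an isomorphism of graded bialgebras. Write $\g=\prim(H)$, which is a connected graded Lie algebra when $H$ is equipped with its commutator bracket. The inclusion $\g\hookrightarrow H$ is a morphism of Lie algebras, so by the universal property of the enveloping algebra it extends uniquely to an algebra morphism $\phi\colon\calU(\g)\longrightarrow H$. Since $\calU(\g)$ is generated as an algebra by $\g$, on which both $\Delta_H\circ\phi$ and $(\phi\otimes\phi)\circ\Delta_{\calU(\g)}$ take the value $x\mapsto x\otimes 1+1\otimes x$, and since both are algebra morphisms, they agree; hence $\phi$ is a morphism of bialgebras. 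It is graded because $\g\hookrightarrow H$ is graded and $\calU(\g)$ carries the induced grading (the one described in the Example). So the whole statement reduces to proving that $\phi$ is bijective.

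First I would treat injectivity. Recall the classical fact, valid in characteristic zero, that $\prim(\calU(\g))=\g$ for any connected graded Lie algebra $\g$: by the Poincar\'e--Birkhoff--Witt theorem the symmetrization map furnishes a graded coalgebra isomorphism $\calU(\g)\cong S(\g)$, and a short computation with the coproduct of $S(\g)$, using that the relevant binomial coefficients are invertible, identifies its primitives with $\g$. Now assume $\ker\phi\neq(0)$ and pick a nonzero $z\in\ker\phi$ of minimal degree $n$; since $\phi(1)=1$ we have $n\geq 1$. Then $\bar\Delta(z):=\Delta(z)-z\otimes 1-1\otimes z$ lies in $\bigoplus_{i=1}^{n-1}\calU(\g)_i\otimes\calU(\g)_{n-i}$, on which $\phi\otimes\phi$ is injective by minimality of $n$; as $(\phi\otimes\phi)(\bar\Delta(z))=\bar\Delta(\phi(z))=0$, we obtain $\bar\Delta(z)=0$, i.e. $z\in\prim(\calU(\g))=\g$. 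But $\phi|_\g$ is the inclusion $\g\hookrightarrow H$, which is injective, contradicting $0\neq z\in\ker\phi$. Hence $\phi$ is injective.

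Surjectivity is the step that genuinely uses cocommutativity, and I expect it to be the main obstacle. Set $A=\im\phi$, the sub-bialgebra of $H$ generated by $\g$; it suffices to show $\g$ generates $H$ as an algebra, i.e. $A=H$, and since $H$ is graded and connected an easy induction on the degree reduces this to the surjectivity of the canonical map $\g\longrightarrow H^+/(H^+)^2$, where $H^+=\bigoplus_{n\geq 1}H_n$. To exhibit the needed splitting I would use the convolution logarithm $\pi_1=\sum_{k\geq 1}\frac{(-1)^{k-1}}{k}(\id_H-\eta\varepsilon)^{*k}$, a well-defined graded endomorphism of $H$ because, by connectedness, only finitely many terms act nontrivially in each degree. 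In full generality $\pi_1$ vanishes on $1$ and on $(H^+)^2$ and restricts to the identity on $\g$; the point where cocommutativity enters is the classical fact that, when $H$ is cocommutative, $\pi_1$ is a projection of $H^+$ onto $\g$ with kernel exactly $(H^+)^2$. This gives $H^+=\g\oplus(H^+)^2$, hence the surjectivity of $\g\to H^+/(H^+)^2$, hence $A=H$.

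Finally, a bijective graded morphism of bialgebras has an inverse that is automatically a graded morphism of bialgebras, so $\phi$ is an isomorphism of graded bialgebras, which is the assertion. The only genuinely non-formal input is the cocommutative case of the statement about $\pi_1$; an alternative route for surjectivity is a direct induction on the degree, showing that every $x\in H_n$ differs from a primitive element by an element of $(H^+)^2$, the inductive step again resting on the cocommutativity of $\bar\Delta(x)$.
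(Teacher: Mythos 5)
Your overall skeleton is sound, and since the paper offers no proof of this theorem (it is recalled with references to Cartier and Milnor--Moore), any argument is necessarily "different" from the paper's. The construction of the graded bialgebra morphism $\phi:\calU(\g)\longrightarrow H$ and the injectivity argument (minimal-degree element of the graded kernel, reduced coproduct, $\prim(\calU(\g))=\g$ in characteristic zero via PBW) are correct. The gap is in the surjectivity step: the properties you ascribe to $\pi_1=\log^*(\id_H)$ are wrong. It is not true "in full generality" that $\pi_1$ vanishes on $(H^+)^2$; that vanishing holds when $H$ is \emph{commutative}, not in general. Take $H=T(\K x\oplus \K y)$ with $x,y$ primitive of degree $1$: this is graded, connected and cocommutative, yet $\pi_1(xy)=xy-\tfrac12(xy+yx)=\tfrac12[x,y]\neq 0$. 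Consequently your asserted conclusion $H^+=\g\oplus (H^+)^2$, with $\pi_1$ a projection onto $\g$ with kernel exactly $(H^+)^2$, is also false: in the same example $[x,y]\in\g\cap (H^+)^2$. Indeed, by Lemma \ref{lem2.5}, $\g\cap\ker(\varepsilon)^2=[\g,\g]$, so the sum $\g+(H^+)^2$ is direct only when $\g$ is abelian, i.e. essentially never in the situations of interest.

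The argument is repairable, because your induction only needs the non-direct statement $H^+=\g+(H^+)^2$. For $k\geq 2$ each convolution power $(\id_H-\eta\varepsilon)^{*k}$ takes its values in $(H^+)^2$ (it is a sum of products of $k$ elements of $\ker(\varepsilon)$), so the correct general fact is the congruence $\pi_1(x)\equiv x \bmod (H^+)^2$ for every $x\in H^+$, not the vanishing of $\pi_1$ on $(H^+)^2$. The genuinely cocommutative input is then the inclusion $\pi_1(H)\subseteq \prim(H)$, which combined with the congruence shows that every homogeneous $x\in H^+$ differs from a primitive element by an element of $(H^+)^2$, and surjectivity follows by your induction on the degree. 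But be aware that this inclusion is not a light "classical fact" to be quoted in passing: it requires characteristic zero and a genuine computation (the Eulerian idempotent theory of Solomon, Reutenauer, Patras, or an equivalent manipulation of $\Delta\circ(\id_H-\eta\varepsilon)^{*k}$ using cocommutativity), and it carries essentially the full weight of the Milnor--Moore theorem you are proving. So either supply a proof of it, or acknowledge that you are citing a result of the same depth as the statement itself --- which is precisely what the paper does by giving no proof at all.
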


\subsection{Graded filtered spaces}

\begin{defi} \label{defi1.5}
A graded filtered space is a triple $V=(V,(V_n)_{n\in \N}, (V^{(k)})_{k\in \N})$ such that:
\begin{itemize}
\item $(V,(V_n)_{n\in \N})$ is a graded space.
\item $(V,(V^{(k)})_{k\in \N})$ is a filtered space.
\item For any $k\in \N$, $\displaystyle V^{(k)}=\bigoplus_{n\in \N} V_n\cap V^{(k)}$.
\end{itemize}
In the sequel, we put $V^{(k)}_n=V_n\cap V^{(k)}$. We shall say that the graded filtered space $V$ is locally finite if for any $n\in \N$, there exists $K(n)\in \N$
such that $V^{(K(n))}_n=(0)$.
\end{defi}

\begin{remark}
If $V$ is a locally finite filtered graded space, then for any $k,n\in \N$, as the family $(V_n^{(k)})_{k\in \N}$ decreases, 
\[k\geq K(n)\Longrightarrow V^{(k)}_n=(0).\]
\end{remark}

\begin{defi}
 Let $V$ and $W$ be two graded filtered spaces. A graded filtered morphism $\phi$ from $V$ to $W$ is a linear map $\phi:V\longrightarrow W$ such that for any $n\in \N$, 
$\phi(V_n)\subseteq W_n$ and for any $k\in \N$, $\phi(V^{(k)})\subseteq W^{(k)}$.
\end{defi}

With these notions, filtered spaces form a category denoted by $\gfs$. Let us consider algebras, coalgebras, bialgebras and Lie algebras in this category.

\begin{defi} \label{defi1.7}
\begin{enumerate}
\item A graded filtered algebra is an algebra $(A,m)$ together with a graduation $(A_n)_{n\in \N}$ and a filtration $(A^{(k)})_{k\in \N}$ making it a graded filtered space, 
such that $1_A\in A_0^{(0)}$ and for any $k,l,m,n\in \N$,
\[m\left(A^{(k)}_m\otimes A^{(l)}_n\right)\subseteq A^{(k+l)}_{m+n}.\]
\item A graded filtered coalgebra is a coalgebra $(C,\Delta)$ together with a graduation $(C_n)_{n\in \N}$ and a filtration $(C^{(k)})_{k\in \N}$ making it a graded filtered space, 
such that for any $(k,n)\neq (0,0)$, $\varepsilon_C\left(C^{(k)}_n\right)=(0)$
and for any $k,n\in\N$, 
\[\Delta\left(C^{(k)}_n\right)\subseteq \sum_{i=0}^k\sum_{j=0}^n C^{(i)}_j\otimes C^{(k-i)}_{n-j}.\]
\item A graded filtered bialgebra is a bialgebra $(H,m,\Delta)$ together with a graduation $(H_n)_{n\in \N}$ and a filtration $(H^{(k)})_{k\in \N}$ making it a graded filtered space, 
making it both a graded filtered algebra and a graded filtered coalgebra. We shall say that $H$ is connected if $H_0$ is 1-dimensional.
\item A graded Lie algebra is a Lie algebra $(\g,[-,-])$ together with a graduation $(\g_n)_{n\in \N}$ and a filtration $(\g^{(k)})_{k\in \N}$ making it a graded filtered space,
such that for any $k,l,m,n\in \N$,
\[\left[\g^{(k)}_m,\g^{(l)}_n\right]\subseteq \g^{(k+l)}_{m+n}.\]
We shall say that it is connected if $\g_0=(0)$.
\end{enumerate}
\end{defi}

\subsection{From graded filtered to graded objects}

\begin{defi}\label{defi1.8}
Let $V$ be a graded filtered space. For any $n\in \N$, we put $\displaystyle \gr(V)_n=\bigoplus_{k=0}^\infty \frac{V_n^{(k)}}{V_n^{(k+1)}}$
and $\displaystyle \gr(V)=\bigoplus_{n=0}^\infty \gr(V)_n$. 
\end{defi}

\begin{prop}\label{prop1.9}
Let $V$ and $W$ be two graded filtered spaces and let $\phi:V\longrightarrow W$ be a graded filtered morphism. We define $\gr(\phi):\gr(V)\longrightarrow \gr(W)$
by the following:
\begin{align*}
&\forall x\in V^{(k)},& \gr(\phi)\left(\pi_V^{(k)}(x)\right)&=\pi_W^{(k)}\circ \phi(x),
\end{align*}
where $\pi_X^{(k)}$ denotes the canonical surjection from $X^{(k)}$ to $\dfrac{X^{(k)}}{X^{(k+1)}}$, for any graded filtered space $X$.
Then $\gr(\phi)$ is a graded morphism.
\end{prop}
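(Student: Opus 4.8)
The plan is to build $\gr(\phi)$ one filtration level at a time and then reassemble. First I would record the identity that makes the prescription meaningful: by the third bullet of Definition~\ref{defi1.5}, for every $k\in\N$ one has $V^{(k)}=\bigoplus_{n\in\N}V^{(k)}_n$, and since the filtration is decreasing, $V^{(k+1)}_n\subseteq V^{(k)}_n$ for all $n$, so
\[\gr(V)=\bigoplus_{n\in\N}\bigoplus_{k\in\N}\frac{V^{(k)}_n}{V^{(k+1)}_n}=\bigoplus_{k\in\N}\frac{V^{(k)}}{V^{(k+1)}},\]
and likewise for $W$. Hence it suffices to produce, for each $k\in\N$, a linear map $V^{(k)}/V^{(k+1)}\longrightarrow W^{(k)}/W^{(k+1)}$, to take their direct sum, and to observe that the formula in the statement pins this map down uniquely.

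For the existence of such a map (well-definedness of $\gr(\phi)$) I would use only that $\phi$ is a filtered morphism. Fixing $k$, the inclusion $\phi(V^{(k)})\subseteq W^{(k)}$ lets us form the linear map $\pi_W^{(k)}\circ\phi\colon V^{(k)}\longrightarrow W^{(k)}/W^{(k+1)}$, and the inclusion $\phi(V^{(k+1)})\subseteq W^{(k+1)}=\ker\pi_W^{(k)}$ shows that it vanishes on $V^{(k+1)}=\ker\pi_V^{(k)}$. It therefore factors uniquely through a linear map $V^{(k)}/V^{(k+1)}\longrightarrow W^{(k)}/W^{(k+1)}$ sending $\pi_V^{(k)}(x)$ to $\pi_W^{(k)}(\phi(x))$. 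Taking the direct sum over $k$ yields a well-defined linear map $\gr(\phi)\colon\gr(V)\longrightarrow\gr(W)$ satisfying the displayed formula, and that formula determines $\gr(\phi)$ completely.

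It remains to check that $\gr(\phi)$ is a graded morphism, and here I would invoke that $\phi$ is a graded morphism. From $\phi(V_n)\subseteq W_n$ we get, for all $k,n\in\N$,
\[\phi(V^{(k)}_n)=\phi(V_n\cap V^{(k)})\subseteq W_n\cap W^{(k)}=W^{(k)}_n,\]
and the same with $k+1$ in place of $k$. Thus the $k$-th component of $\gr(\phi)$ maps the summand $V^{(k)}_n/V^{(k+1)}_n$ of $\gr(V)_n$ into $W^{(k)}_n/W^{(k+1)}_n\subseteq\gr(W)_n$; summing over $k$ gives $\gr(\phi)(\gr(V)_n)\subseteq\gr(W)_n$, which is the assertion.

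I do not anticipate any genuine obstacle here: the proposition is a bookkeeping consequence of Definition~\ref{defi1.5} together with the defining properties of a graded filtered morphism. The one point deserving a little care is the identification $\gr(V)\cong\bigoplus_{k}V^{(k)}/V^{(k+1)}$, which is exactly what allows the construction to proceed filtration level by filtration level and guarantees that the grading index $n$ and the filtration index $k$ interact as expected.
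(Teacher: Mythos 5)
Your proof is correct and follows essentially the same route as the paper: well-definedness comes from $\phi(V^{(k+1)})\subseteq W^{(k+1)}$ (you phrase it as factoring through the quotient, the paper checks it on representatives, which is the same argument), and gradedness comes from $\phi(V_n)\subseteq W_n$. Your explicit remark that $\gr(V)\cong\bigoplus_k V^{(k)}/V^{(k+1)}$ via the third bullet of Definition~\ref{defi1.5} is a harmless bookkeeping elaboration that the paper leaves implicit.
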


\begin{proof}
Let us assume that $\pi_X^{(k)}(x)=\pi_X^{(k)}(y)$. Then $x-y\in V^{(k+1)}$. As $\phi$ is graded filtered, $\phi(x-y)\in W^{(k+1)}$, so $\pi_W^{(k)}\circ \phi(x-y)=0$,
and finally $\pi_W^{(k)}\circ \phi(x)=\pi_W^{(k)}\circ \phi(y)$. We proved that $\gr(\phi)$ is well-defined. Let $x\in V_n^{(k)}$. As $\phi$ is graded, $\phi(x)\in W_n$,
so $\gr(\phi)\left(\pi_V^{(k)}(x)\right)\in \gr(W)_n$: $\gr(\phi)$ is graded.
\end{proof}

Obviously, if $V,W,X$ are three graded filtered spaces and $\phi:V\longrightarrow W$, $\psi:W\longrightarrow X$, then $\gr(\psi\circ \phi)=\gr(\psi)\circ \gr(\phi)$.
Moreover, $\gr(\id_{V})=\id_{\gr(V)}$, so:

\begin{prop}
$\gr:\gfs\longrightarrow \gs$ is a functor.
\end{prop}

\begin{prop}\label{prop1.11}
Let $V$ and $W$ be two graded spaces. Then the vector space $V\otimes W$ is graded, with, for any $n\in \N$,
\[(V\otimes W)_n=\bigoplus_{i=0}^n V_i \otimes W_{n-i}.\]
\end{prop}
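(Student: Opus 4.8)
The plan is to reduce the statement to the standard fact that the tensor product commutes with direct sums, and then to reorganize the resulting double-indexed decomposition by total degree. Writing $V=\bigoplus_{i\in\N}V_i$ and $W=\bigoplus_{j\in\N}W_j$, I would first invoke the natural isomorphism
\[
V\otimes W=\left(\bigoplus_{i\in\N}V_i\right)\otimes\left(\bigoplus_{j\in\N}W_j\right)\cong\bigoplus_{(i,j)\in\N^2}V_i\otimes W_j,
\]
which holds for vector spaces over $\K$ even when the $V_i$ and $W_j$ are infinite-dimensional, since tensoring by a fixed space is a left adjoint (to the internal Hom) and hence preserves arbitrary coproducts. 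This identifies $V\otimes W$ with the internal direct sum of the images of the subspaces $V_i\otimes W_j$ under the maps induced by the inclusions $V_i\hookrightarrow V$ and $W_j\hookrightarrow W$.

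Next I would regroup the index set $\N^2$ according to the value of $i+j$. For each $n\in\N$ the subspace $(V\otimes W)_n=\bigoplus_{i=0}^n V_i\otimes W_{n-i}$ is exactly the sum of those $V_i\otimes W_j$ with $i+j=n$. Since $\N^2$ is the disjoint union, over $n\in\N$, of the fibres $\{(i,j)\mid i+j=n\}$, the decomposition above refines to
\[
V\otimes W\cong\bigoplus_{n\in\N}\;\bigoplus_{i+j=n}V_i\otimes W_j=\bigoplus_{n\in\N}(V\otimes W)_n,
\]
so the subspaces $(V\otimes W)_n$ are in direct sum and together span $V\otimes W$. This is precisely the assertion that $\bigl((V\otimes W)_n\bigr)_{n\in\N}$ is a graduation of $V\otimes W$, with the stated formula for the degree-$n$ component.

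The only point requiring any care — and it is genuinely the sole obstacle — is the very first isomorphism: because the graded components are not assumed finite-dimensional, one must appeal to distributivity of $\otimes$ over \emph{arbitrary} direct sums rather than over finite ones. Once that is granted, everything else is a bookkeeping of indices, which is why the claim can fairly be dispatched by "immediate verifications"; in a written-out proof I would simply make the regrouping by total degree explicit so that the formula for $(V\otimes W)_n$ is transparent.
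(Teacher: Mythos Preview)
Your proof is correct and is precisely the explicit version of what the paper summarizes as ``Immediate verifications'': you distribute the tensor product over the two direct sums and regroup by total degree. There is no meaningful difference in approach.
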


\begin{proof}
Immediate verifications.
\end{proof}

\begin{prop}
Let $V$ and $W$ be two graded filtered spaces. Then the vector space $V\otimes W$ is graded filtered, with, for any $k,n\in \N$,
\begin{align*}
(V\otimes W)_n&=\bigoplus_{i=0}^n V_i \otimes W_{n-i},&(V\otimes W)^{(k)}&=\sum_{i=0}^k V^{(i)}\otimes W^{(k-i)}.
\end{align*}
\end{prop}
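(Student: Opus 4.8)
The plan is to verify directly that the two structures proposed on $V\otimes W$ are compatible, i.e.\ that the graded structure $(V\otimes W)_n=\bigoplus_{i=0}^n V_i\otimes W_{n-i}$ and the filtered structure $(V\otimes W)^{(k)}=\sum_{i=0}^k V^{(i)}\otimes W^{(k-i)}$ together satisfy the third axiom of Definition~\ref{defi1.5}, namely that $(V\otimes W)^{(k)}=\bigoplus_{n\in\N}(V\otimes W)_n\cap(V\otimes W)^{(k)}$. The gradedness of $V\otimes W$ is already Proposition~\ref{prop1.11}, and the fact that $\bigl((V\otimes W)^{(k)}\bigr)_{k\in\N}$ is a decreasing sequence of subspaces exhausting $V\otimes W$ follows at once from the corresponding properties of the filtrations of $V$ and $W$ (since $V^{(0)}=V$, $W^{(0)}=W$, and $V^{(i)}\otimes W^{(k-i)}\supseteq V^{(i+1)}\otimes W^{(k-i)}$, etc.). So the only real content is the compatibility condition.

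First I would pick bases (or use that $\otimes$ over a field is exact, so all the subspaces in sight are honest subspaces and sums/intersections behave well): since each $V_n$ splits as $\bigoplus_k$ of complements of $V_n^{(k+1)}$ in $V_n^{(k)}$, one can choose a graduation-compatible basis of $V$ in which each $V_n^{(k)}$ is spanned by a subset of basis vectors, and likewise for $W$. Then $V\otimes W$ has the basis of pairwise tensor products, each such tensor $v\otimes w$ carries a well-defined bidegree: a graduation degree ($\deg v+\deg w$) and a ``filtration level'' (the largest $k$ with $v\otimes w\in (V\otimes W)^{(k)}$), and one checks that $v\otimes w\in V^{(i)}\otimes W^{(k-i)}$ for suitable $i$ exactly describes the spanning set of $(V\otimes W)^{(k)}$. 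The key step is then the identity
\[
\Bigl(\sum_{i=0}^k V^{(i)}\otimes W^{(k-i)}\Bigr)\cap\Bigl(\bigoplus_{n}\,\bigoplus_{p+q=n}V_p\otimes W_q\Bigr)
=\bigoplus_{n}\sum_{i=0}^k\ \bigoplus_{p+q=n}V_p^{(i)}\otimes W_q^{(k-i)},
\]
which in the chosen basis is just the observation that a homogeneous element lies in the filtration stage iff each of its basis components does; this uses crucially that the $V^{(i)}$ are themselves graded, i.e.\ $V^{(i)}=\bigoplus_n V_n^{(i)}$, so the ambient graduation and filtration of each factor are already ``aligned''.

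The main obstacle I anticipate is purely bookkeeping: keeping the two indices (graduation $n$, filtration $k$) separate and making sure that the sum $\sum_{i=0}^k V^{(i)}\otimes W^{(k-i)}$ — which is a sum, not a direct sum — is correctly intersected with the direct-sum decomposition by total degree. The cleanest way to sidestep any subtlety about the non-directness of that sum is the basis argument above, where every subspace in play is spanned by a sub-family of a single fixed basis of $V\otimes W$, so that intersections and sums of them are computed componentwise on basis vectors and there is nothing to prove beyond a tautology about index sets. I would therefore simply write ``Immediate verifications.'' or, if a sentence is wanted, indicate that one chooses graduation-and-filtration-adapted bases of $V$ and $W$ and checks the three axioms on the induced basis of $V\otimes W$.
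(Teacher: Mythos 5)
Your overall route is the right one: the only axiom with real content is the compatibility condition of Definition~\ref{defi1.5}, and the crucial input is exactly the one you single out, namely $V^{(i)}=\bigoplus_n V_n^{(i)}$ and $W^{(j)}=\bigoplus_n W_n^{(j)}$ (the paper itself disposes of the whole statement with ``Immediate verifications''). However, the device you rely on to make the verification tautological has a genuine flaw at the stated level of generality. You claim that each $V_n$ splits as $\bigoplus_k$ of chosen complements of $V_n^{(k+1)}$ in $V_n^{(k)}$, so that one can pick a basis of $V$ adapted simultaneously to the graduation and to every filtration step. Definition~\ref{defi1.5} does not assume local finiteness, and without it this splitting can fail: take $V=\K[[X]]$ concentrated in degree $0$ with $V^{(k)}=X^k\K[[X]]$ (the example in the remark following Proposition~\ref{prop2.3}). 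Each quotient $V^{(k)}/V^{(k+1)}$ is one-dimensional, so any direct sum of complements has countable dimension, whereas $\K[[X]]$ does not; consequently there is no basis of $V$ in which every $V^{(k)}$ is spanned by a subfamily, and your basis argument only covers the locally finite case.

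Fortunately the basis is unnecessary, and the fix keeps your key identity. From $V^{(i)}=\bigoplus_p V_p^{(i)}$ and $W^{(k-i)}=\bigoplus_q W_q^{(k-i)}$ one gets
\[
V^{(i)}\otimes W^{(k-i)}=\bigoplus_{p,q}V_p^{(i)}\otimes W_q^{(k-i)},
\]
so each summand of $(V\otimes W)^{(k)}$ is a graded subspace of $V\otimes W$ for the total degree. A (possibly non-direct) sum of graded subspaces is again graded: any element of the sum is a finite sum of elements of the summands, and its homogeneous components are the sums of their homogeneous components, which lie in the sum. Hence $(V\otimes W)^{(k)}=\bigoplus_n (V\otimes W)_n\cap (V\otimes W)^{(k)}$ for every $k$, which together with Proposition~\ref{prop1.11}, $(V\otimes W)^{(0)}=V\otimes W$ and the decreasingness you already checked, proves the proposition in full generality and with no case distinction.
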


\begin{proof}
Immediate verifications.
\end{proof}

\begin{prop}
\begin{enumerate}
\item Let $V,W$ be two graded filtered spaces. Then, as graded spaces,
\[\gr(V\otimes W)=\gr(V)\otimes \gr(W).\]
\item Let $V,W,V',W'$ be four graded filtered spaces, $f:V\longrightarrow V'$ and $g:W\longrightarrow W'$ be two graded connected maps. Then
\[\gr(f\otimes g)=\gr(f)\otimes \gr(g).\]
\end{enumerate}
\end{prop}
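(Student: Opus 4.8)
The plan is to prove $(1)$ by constructing an explicit natural isomorphism and then to read off $(2)$ from its naturality. For $(1)$, I would define a linear map
\[\Theta:\gr(V)\otimes\gr(W)\longrightarrow\gr(V\otimes W),\qquad\Theta\bigl(\pi_V^{(k)}(x)\otimes\pi_W^{(l)}(y)\bigr)=\pi_{V\otimes W}^{(k+l)}(x\otimes y),\]
for $x\in V^{(k)}$, $y\in W^{(l)}$. That $\Theta$ is well defined and graded is a routine check, and is exactly what the filtration $(V\otimes W)^{(m)}=\sum_{i+j=m}V^{(i)}\otimes W^{(j)}$ is designed for: replacing $x$ by $x+x'$ with $x'\in V^{(k+1)}$ changes $x\otimes y$ by $x'\otimes y\in V^{(k+1)}\otimes W^{(l)}\subseteq(V\otimes W)^{(k+l+1)}$, and symmetrically on the right factor. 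The work is to show $\Theta$ is bijective. From the formulas for the grading and filtration of $V\otimes W$ one gets $(V\otimes W)_n^{(m)}=\bigoplus_{i+j=n}\bigl(\sum_{k+l=m}V_i^{(k)}\otimes W_j^{(l)}\bigr)$; comparing this with the definitions of $\gr(V\otimes W)$ and of $\gr(V)\otimes\gr(W)$, and using that $\Theta$ respects the resulting decompositions, I would reduce to the following claim: for fixed $i,j,m$, the map induced by $\Theta$ from $\bigoplus_{k+l=m}\frac{V_i^{(k)}}{V_i^{(k+1)}}\otimes\frac{W_j^{(l)}}{W_j^{(l+1)}}$ to $\frac{\sum_{k+l=m}V_i^{(k)}\otimes W_j^{(l)}}{\sum_{k+l=m+1}V_i^{(k)}\otimes W_j^{(l)}}$ is an isomorphism.

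Write $A=V_i$, $B=W_j$ with the induced filtrations. Surjectivity is clear: every element of $\sum_{k+l=m}A^{(k)}\otimes B^{(l)}$ is a finite sum of tensors $x\otimes y$ with $x\in A^{(k)}$, $y\in B^{(l)}$, $k+l=m$. For injectivity I would fix, for each $k$, a complement $C_k$ of $A^{(k+1)}$ in $A^{(k)}$, and for each $l$ a complement $D_l$ of $B^{(l+1)}$ in $B^{(l)}$ (possible since $\K$ is a field). Iterating gives $A=C_0\oplus\cdots\oplus C_k\oplus A^{(k+1)}$ for every $k$, so the $C_k$ lie in direct sum and $\pi_A^{(k)}$ restricts to an isomorphism $C_k\to A^{(k)}/A^{(k+1)}$, and similarly for the $D_l$; under these identifications the map in question becomes the inclusion $\bigoplus_{k+l=m}C_k\otimes D_l\hookrightarrow\sum_{k+l=m}A^{(k)}\otimes B^{(l)}$ composed with the quotient by $\sum_{k+l=m+1}A^{(k)}\otimes B^{(l)}$. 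So injectivity reduces to $\bigl(\bigoplus_{k+l=m}C_k\otimes D_l\bigr)\cap\bigl(\sum_{k+l=m+1}A^{(k)}\otimes B^{(l)}\bigr)=(0)$, which I would establish with a separating functional: given a nonzero $z=\sum_{k+l=m}z_{k,l}$ with $z_{k,l}\in C_k\otimes D_l$ and some $z_{k_0,l_0}\neq0$, choose $\alpha\in C_{k_0}^*$, $\beta\in D_{l_0}^*$ with $(\alpha\otimes\beta)(z_{k_0,l_0})\neq0$, then extend $\alpha$ to $\tilde\alpha\in A^*$ that vanishes on $C_k$ for all $k<k_0$ and on $A^{(k_0+1)}$, and $\beta$ to $\tilde\beta\in B^*$ that vanishes on $D_l$ for all $l<l_0$ and on $B^{(l_0+1)}$. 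Then $\tilde\alpha$ vanishes on $A^{(k)}$ for $k\geq k_0+1$ and $\tilde\beta$ on $B^{(l)}$ for $l\geq l_0+1$, so $\tilde\alpha\otimes\tilde\beta$ annihilates $A^{(k)}\otimes B^{(l)}$ whenever $k+l=m+1=k_0+l_0+1$ (as then $k\geq k_0+1$ or $l\geq l_0+1$), hence annihilates $\sum_{k+l=m+1}A^{(k)}\otimes B^{(l)}$, while $(\tilde\alpha\otimes\tilde\beta)(z)=(\alpha\otimes\beta)(z_{k_0,l_0})\neq0$; so $z$ is not in that subspace. This proves $(1)$.

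For $(2)$: $f\otimes g$ is again a morphism of graded filtered spaces, so $\gr(f\otimes g)$ makes sense by Proposition \ref{prop1.9}, and it suffices to evaluate both sides on a generator. Writing $\Theta,\Theta'$ for the isomorphisms of $(1)$ attached to $(V,W)$ and $(V',W')$, both $\gr(f\otimes g)\circ\Theta$ and $\Theta'\circ(\gr(f)\otimes\gr(g))$ send $\pi_V^{(k)}(x)\otimes\pi_W^{(l)}(y)$ to $\pi_{V'\otimes W'}^{(k+l)}(f(x)\otimes g(y))$, so they agree; this is the asserted identity, once $\gr$ of a tensor product is identified via $(1)$. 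I expect injectivity in $(1)$ to be the main obstacle: surjectivity of $\Theta$ drops straight out of the defining formula for the filtration of $V\otimes W$, but showing that the subspace $\bigoplus_{k+l=m}C_k\otimes D_l$ meets the next filtration step trivially genuinely needs both the choice of complements and the separating functionals above, a dimension count being unavailable because the quotients $V_i^{(k)}/V_i^{(k+1)}$ need not be finite-dimensional.
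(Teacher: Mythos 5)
Your proposal is correct and follows essentially the same route as the paper: the same map $\Theta$, a choice of complements of $V^{(k+1)}$ in $V^{(k)}$ (the paper's $V^{(=k)}$), and a separating-functional argument showing that the span of these complements meets the next filtration step trivially. The only differences are cosmetic: you reduce to fixed graded components and prove bijectivity of the induced map directly, whereas the paper argues globally by factoring $\Theta$ through two auxiliary isomorphisms out of $\bigoplus_{k,l}V^{(=k)}\otimes W^{(=l)}$; and for part (2) you spell out the naturality check that the paper dismisses as immediate.
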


\begin{proof}
1. By construction, 
\begin{align*}
(\gr(V)\otimes \gr(W))^{(k)}&=\bigoplus_{i=0}^k \frac{V^{(i)}}{V^{(i+1)}}\otimes \frac{W^{(k-i)}}{W^{(k-i+1)}}.
\end{align*}
Let $x\in V^{(i)}$ and $y\in W^{(k-i)}$. Then $x\otimes y\in (V\otimes W)^{(k)}$. Moreover, if $x-x'\in V^{(i+1)}$ and $y-y'\in W^{(k-i+1)}$, then
\[x\otimes y-x'\otimes y'=(x-x')\otimes y+x'\otimes (y-y')\in V^{(i+1)}\otimes W^{(k-i)}+V^{(i)}\otimes W^{(k-i+1)}\subseteq (V\otimes W)^{(k+1)}.\]
Therefore, we define a linear map
\begin{align*}
\theta:&\left\{\begin{array}{rcl}
\gr(V)\otimes \gr(W)&\longrightarrow&\gr(V\otimes W)\\
\pi_V^{(i)}(x)\otimes \pi_W^{(k-i)}(y)&\longmapsto&\pi_{V\otimes W}^{(k)}(x\otimes y).
\end{array}\right.
\end{align*}
Let us prove that $\theta$ is an isomorphism. For any $k\in \N$, let us choose a complement 
$V^{(=k)}$ of $V^{(k+1)}$ in $V^{(k)}$ and $W^{(=k)}$ of $W^{(k+1)}$ in $W^{(k)}$. 
The following maps are then isomorphisms:
\begin{align*}
\psi_V&:\left\{\begin{array}{rcl}
\displaystyle \bigoplus_{k\in \N}V^{(=k)}&\longrightarrow& \gr(V)\\
v\in V^{(=k)}&\longmapsto&\pi_V^{(k)}(v),
\end{array}\right.&
\psi_W&:\left\{\begin{array}{rcl}
\displaystyle \bigoplus_{l\in \N}W^{(=l)}&\longrightarrow& \gr(W)\\
w\in W^{(=l)}&\longmapsto&\pi_W^{(l)}(w),
\end{array}\right.&
\end{align*}
and the following map is an isomorphism:
\[\theta_1=\psi_V\otimes \psi_W:
\left\{\begin{array}{rcl}
\displaystyle \bigoplus_{k,l\in \N} V^{(=k)}\otimes W^{(=l)}&\longrightarrow&\gr(V)\otimes \gr(W)\\
v\otimes w\in V^{(=k)}\otimes W^{(=l)}&\longmapsto&
\pi_V^{(k)}(v)\otimes \pi_W^{(l)}(w).
\end{array}\right.\]
Let $n\in \N$. Let us prove that
\[\left(\bigoplus_{k+l=n} V^{(=k)}\otimes W^{(=l)}\right)\oplus
(V\otimes W)^{(n+1)}=W^{(n)}.\]
Any element of $W^{(n)}$ is a linear span of elements $x\otimes y$, with $x\in V^{(k)}$, $y\in V^{(l)}$, with $k+l=n$. 
Moreover, for such a tensor, we can write
\begin{align*}
x&=x_1+x_2,&x_1&\in V^{(=k)}, &x_2&\in V^{(k+1)},\\
y&=y_1+y_2,&y_1&\in W^{(=l)}, &y_2&\in W^{(l+1)},
\end{align*}
so
\[x\otimes y=\underbrace{x_1\otimes y_1}_{\in V^{(=k)}\otimes W^{(=l)}}
+\underbrace{x_1\otimes y_2+x_2\otimes y_1+x_2\otimes y_2}_{\in (V\otimes W)^{(n+1)}},\]
which gives
\[\left(\bigoplus_{k+l=n} V^{(=k)}\otimes W^{(=l)}\right)+ (V\otimes W)^{(n+1)}=W^{(n)}.\]
Let us now assume that
\[\left(\bigoplus_{k+l=n} V^{(=k)}\otimes W^{(=l)}\right)\cap (V\otimes W)^{(n+1)}\neq (0).\]
Let us consider a nonzero element $X$ in this intersection. Let us assume that the component $X_{k,l}$ 
of $X$ in $V^{(=k)}\otimes W^{(=l)}$ is nonzero. We write it as a sum of tensors
\[X_{k,l}=\sum_{i=1}^p v_i\otimes w_i,\]
with $p$ minimal. Because of this minimality, the families $(v_i)$ and $(w_i)$ are linearly independent:
hence, we can find $f$ in the dual of $V^{(=k)}$ and $g\in W^{(=l)}$ such that for any $i$,
\[f(v_i)=g(w_i)=\delta_{1,i}.\]
These maps $f$ and $g$ are extended to $V$ such that $f(V^{(k+1)})=g(W^{(l+1)})=(0)$. Then
\[f\otimes g(X_{k,l})=\sum_{i}f(v_i)g(w_j)+0=1.\]
As $X_{k,l}\in (V\otimes W)^{(n+1)}$, we can write it under the form
\[X_{k,l}=\sum_j x'_j\otimes y'_j,\]
with for any $j$, $x'_j \in V^{(k_j)}$, $y'_j\in V^{(l_j)}$, with $k_j+l_j>n$. Therefore, for any $j$, $k_j>k$ or $l_j>l$,
so $f\otimes g(x'_j\otimes y'_j)=0$ and finally $f\otimes g(X_{k,l})=0$: this is a contradiction. \\

As a consequence, we obtain an isomorphism
\[\theta_2:\left\{\begin{array}{rcl}
\displaystyle \bigoplus_{n\in \N}\left(\bigoplus_{k+l=n} V^{(=k)}\otimes W^{(=l)}\right)
=\bigoplus_{k,l\in \N} V^{(=k)}\otimes W^{(=l)}&\longrightarrow&\gr(V\otimes W)\\
v\otimes w\in V^{(=k)}\otimes W^{(=l)}&\longmapsto&\pi_{V\otimes W}^{(k+l)}(v\otimes w).
\end{array}\right.\]

We obtain the following commutative diagram:
\[\xymatrix{\gr(V)\otimes \gr(W)\ar[rr]^{\theta}&&\gr(V\otimes W)\\
&\displaystyle \bigoplus_{k,l\in \N} V^{(=k)}\otimes W^{(=l)}\ar[lu]^{\theta_1}\ar[ru]_{\theta_2}
}\]
As $\theta_1$ and $\theta_2$ are isomorphisms, $\theta=\theta_2\circ \theta_1^{-1}$ is an isomorphism. 
We now identify $\gr(V\otimes W)$ and $\gr(V)\otimes \gr(W)$ through this natural isomorphism. \\

2. Immediate verifications.
\end{proof}

Consequently, the functor $\gr$ is compatible with the tensor products. Consequently, if we have a graded filtered algebra (or Lie algebra,  bialgebra$\ldots$) $V$, 
then $\gr(V)$ is a graded algebra (or Lie algebra, bialgebra$\ldots$).

\section{Free graded connected algebras and enveloping algebras}

\subsection{Free graded algebras}

\begin{defi}
Let $A$ be a graded connected algebra. Its augmentation  ideal is $\displaystyle A_+=\bigoplus_{n=1}^\infty A_n$. It is an ideal of codimension 1.
\end{defi}

\begin{prop} \label{prop2.2}
Let $A$ be a graded connected algebra, free as a graded algebra. Let $V$ be a graded subspace of $A$ such that
$A_+=V\oplus A_+^2$. Then $A$ is isomorphic to $T(V)$ as a graded algebra. 
\end{prop}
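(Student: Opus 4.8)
The plan is to build an explicit isomorphism $T(V)\to A$ using the universal property of the free graded algebra $T(V)$ and then show it is bijective by a graded dimension/filtration argument. First I would let $\iota:V\hookrightarrow A$ be the inclusion; since $V$ is a graded subspace, $\iota$ is a graded linear map, and by the universal property of the tensor algebra it extends uniquely to a morphism of graded algebras $\Phi:T(V)\to A$. The map $\Phi$ sends $1$ to $1_A$ and, because $V\subseteq A_+$, it sends $T(V)_+=\bigoplus_{n\ge 1}V^{\otimes n}$ into $A_+$. So the whole problem reduces to proving $\Phi$ is an isomorphism.

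For surjectivity I would argue by induction on the graduation degree $n$. In degree $0$ both sides are $\K$ and $\Phi$ is the identity. Assume $A_m\subseteq \im\Phi$ for all $m<n$. Any element of $A_n$ (for $n\ge 1$) lies in $A_+$, hence decomposes along $A_+=V\oplus A_+^2$; the $V$-part is in $\im\Phi$ by construction, and the $A_+^2$-part is a sum of products $ab$ with $a,b\in A_+$ homogeneous of degrees $<n$ summing to $n$, hence in $\im\Phi$ by the induction hypothesis and multiplicativity of $\Phi$. Thus $A_n\subseteq\im\Phi$, and $\Phi$ is onto.

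For injectivity I would use the hypothesis that $A$ is \emph{free} as a graded algebra, say $A\cong T(W)$ for some graded space $W$ with $W\subseteq A_+$. The standard fact is that for a free graded connected algebra $T(W)$ one has $T(W)_+/T(W)_+^2\cong W$ as graded spaces, so the space of indecomposables $A_+/A_+^2$ is isomorphic (as a graded space) to $W$; on the other hand the hypothesis $A_+=V\oplus A_+^2$ identifies $A_+/A_+^2$ with $V$, so $V\cong W$ as graded spaces, degree by degree. Consequently $T(V)$ and $T(W)\cong A$ have the same graded dimension in every degree (each $(T(V))_n$ and $A_n$ being a finite — or at worst matching — sum of tensor products of the homogeneous pieces of $V$, resp.\ $W$). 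Then a surjective graded map $\Phi:T(V)\to A$ between graded spaces with equal homogeneous dimensions must be injective in each degree; if some $H_n$ were infinite-dimensional one instead compares via the chosen graded bases transported by the isomorphism $V\cong W$, observing that $\Phi$ carries a basis of $T(V)$ onto a spanning set which, having the ``right size'' on each side, must be a basis.

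The main obstacle is the injectivity step: surjectivity is a routine induction, but to conclude injectivity one genuinely needs the freeness hypothesis on $A$, funneled through the statement that indecomposables of a free graded connected algebra recover the generating space. I would either invoke this as a known lemma about tensor algebras (it follows from the explicit splitting $T(W)_+=W\oplus T(W)_+^2$, where $T(W)_+^2=\bigoplus_{n\ge 2}W^{\otimes n}$) or prove it in two lines, and then the dimension-count/basis-transport argument closes the proof. The subtlety with possibly infinite-dimensional $H_n$ is handled by working with bases rather than dimensions, which is why I would phrase the final comparison in terms of $\Phi$ mapping a chosen basis of $T(V)$ bijectively onto a basis of $A$.
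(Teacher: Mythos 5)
Your construction of $\Phi:T(V)\to A$ and your surjectivity argument (induction on the degree, using $A_+=V\oplus A_+^2$ and homogeneity of the product) are fine and coincide with the paper's. The gap is in the injectivity step. Identifying $A_+/A_+^2$ with both $V$ and a free generating space $W$ does give $V\cong W$ as graded spaces, but the conclusion ``a surjective graded map between graded spaces of equal homogeneous dimensions is injective'' is only valid when each homogeneous component is finite-dimensional, and the proposition carries no such hypothesis --- the paper explicitly allows infinite-dimensional components, and needs this generality when the proposition is applied (e.g.\ in Proposition \ref{prop3.2}). Your fallback for that case does not work: a spanning family having the same cardinality as a basis need not be a basis in infinite dimensions (in a space with basis $(e_n)_{n\geq 1}$, the family $(e_1,\,e_1+e_2,\,e_2,\,e_3,\dots)$ spans, has the same cardinality, and is not free). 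So nothing in your argument actually shows that $\Phi$ sends the monomial basis of $T(V)$ to a linearly independent family.

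What closes the gap --- and is the paper's actual argument --- is a triangularity argument with respect to word length, not a count of dimensions. Choose a basis $(w_i)_{i\in I}$ of a graded space $W$ freely generating $A$; then $W\subseteq A_+$ and $A_+=W\oplus A_+^2$, so comparing with $A_+=V\oplus A_+^2$ yields a graded isomorphism $\phi:W\to V$ with $w-\phi(w)\in A_+^2$ for all $w\in W$. Set $v_i=\phi(w_i)$. A hypothetical nontrivial relation among the monomials $v_{i_1}\cdots v_{i_p}$ is rewritten in the $w$-monomial basis: if $m$ is the minimal length occurring with a nonzero coefficient, the substitution $v_i=w_i+(\text{terms in }A_+^2)$ shows that the length-$m$ component of the relation is exactly $\sum a_{i_1,\dots,i_m}w_{i_1}\cdots w_{i_m}$, all other contributions being spans of $w$-monomials of length $>m$; freeness of $A$ on $W$ forces these coefficients to vanish, contradicting the choice of $m$. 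This argument is insensitive to the dimensions of the graded pieces. If you insist on the dimension count, you must add a local finiteness hypothesis, which would make the proposition too weak for its use later in the paper.
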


\begin{proof}
Let $W$ be a graded subspace of $W$ such that $W$ freely generates $A$.
As $W$ is graded and $A_0=\K  1_A$, necessarily $W_0=(0)$, so $W\subseteq A_+$. Let us chose a basis $(w_i)_{i\in I}$ of $W$. Then:
\begin{itemize}
\item A basis of $A$ is given by $(w_{i_1}\ldots w_{i_k})_{k\in\geq 0,\: i_1,\ldots,i_k\in I}$.
\item A basis of $A_+$ is given by  $(w_{i_1}\ldots w_{i_k})_{k\geq 1,\: i_1,\ldots,i_k\in I}$.
\item A basis of $A_+^2$ is given by  $(w_{i_1}\ldots w_{i_k})_{k\geq 2,\: i_1,\ldots,i_k\in I}$.
\end{itemize}
Consequently, $A_+=W\oplus A_+^2$. As $A_+=V\oplus A_+^2$, we obtain a linear isomorphism $\phi:W\longrightarrow V$ such that
for any $x\in W$, $x-\phi(x)\in A_+^2$. As $V$, $A_+^2$ and $W$ are graded, $\phi$ is graded. 
For any $i\in I$, we put $v_i=\phi(w_i)$. Let us prove that $(v_{i_1}\ldots v_{i_p})_{p\geq 0, \: i_1,\ldots,i_p\in I}$
is a basis of $A_+$.

Let us firstly assume that there exists a family $(a_{i_1,\ldots,i_p})_{p\geq 0,\: i_1,\ldots,i_p\in I}$, with a finite (but non zero)
number of elements which are non zero, such that
\[\sum_{p\in \N}\sum_{i_1,\ldots,i_p\in I} a_{i_1,\ldots,i_p} v_{i_1}\ldots v_{i_p}=0.\]
Let us denote by $m$ the minimal $p$ such that there exists a non zero coefficient $a_{i_1,\ldots,i_p}$. As $w_i-v_i\in A_+^2$ for any $i$,
we obtain that 
\begin{align*}
\sum_{p\in \N}\sum_{i_1,\ldots,i_p\in I} a_{i_1,\ldots,i_p} v_{i_1}\ldots v_{i_p}
&=\sum_{i_1,\ldots,i_m}a_{i_1,\ldots,i_m} w_{i_1}\ldots w_{i_m}\\
&+\mbox{a span of monomials $w_{i_1}\ldots w_{i_p}$ with $p>m$}.
\end{align*}
As $(w_{i_1}\ldots w_{i_p})_{p\geq 0,\: i_1,\ldots,i_p\in I}$ is a basis of $A$, we obtain that for any $i_1,\ldots,i_m\in I$, $a_{i_1,\ldots,i_m}=0$,
which contradicts the definition of $m$. Therefore, $(v_{i_1}\ldots v_{i_p})_{p\geq 0,\: i_1,\ldots,i_p\in I}$ is linearly independent. 

Let us now prove that it is a generating family of $A$. It is equivalent to show that $V$ generates $A$ as an algebra.
We denote by $A'$ the subalgebra of $A$ generated by $V$ and let us prove that for any $a\in A_n$, $a\in A'$ by induction on $n$.
It is trivial if $n=0$, as $A_0=\K 1_A$. Let us assume the result at all ranks $<n$. As $A_+=V\oplus A_+^2$,
we can write $\displaystyle a=v+\sum_{i=1}^p a_i b_i$, with $v\in V$, and for any $i$, $a_i,b_i\in A_+$. 
By homogeneity of the product, we can assume that $v\in V_n$ and that for any $i$,
$a_i$ and $b_i$ are homogeneous, of respective degrees $\alpha_i$ and $\beta_i$, with $\alpha_i+\beta_i=n$. 
As $a_i,b_i$ are in $A_+$, we necessarily have $\alpha_i,\beta_i\geq 1$, so $\alpha_i,\beta_i<n$. By the induction hypothesis,
they belong to $A'$, so as $V\subseteq A_+$, $a\in A_+$. 
 \end{proof}

 \begin{prop}\label{prop2.3}
Let $A$ be a connected, graded filtered, locally finite algebra. If $\gr(A)$ is free as a graded algebra, then $A$ is free as a graded algebra.
\end{prop}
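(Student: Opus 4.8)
The plan is to lift a space of free generators from $\gr(A)$ up to $A$, and then to use the filtration to turn the resulting algebra morphism into an isomorphism. Note first that $\gr(A)$ is bigraded: besides its graduation $\gr(A)_n$ it carries the decomposition by filtration degree $\gr(A)_n=\bigoplus_k\gr(A)_n^{[k]}$, where $\gr(A)_n^{[k]}=A_n^{(k)}/A_n^{(k+1)}$, and the graded filtered algebra axioms make its product respect this bigraduation. Moreover $\gr(A)$ is connected, since local finiteness forces $A_0^{(1)}=(0)$ (otherwise $1_A\in A^{(1)}$ would give $A=A^{(k)}$ for every $k$). As $\gr(A)$ is free as a graded algebra, Proposition \ref{prop2.2} applied to $\gr(A)$ provides a graded subspace $\bar V\subseteq\gr(A)_+$ with $\gr(A)_+=\bar V\oplus\gr(A)_+^2$ for which the canonical algebra morphism $T(\bar V)\to\gr(A)$ is an isomorphism; since $\gr(A)_+$ and $\gr(A)_+^2$ are bigraded, $\bar V$ can be chosen bigraded, $\bar V=\bigoplus_{n\geq 1,\,k\geq 0}\bar V_n^{[k]}$ with $\bar V_n^{[k]}\subseteq A_n^{(k)}/A_n^{(k+1)}$.

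For each $n\geq 1$ and $k\geq 0$, choose a subspace $V_n^{[k]}\subseteq A_n^{(k)}$ mapped isomorphically onto $\bar V_n^{[k]}$ by the canonical projection, and set $V=\bigoplus_{n,k}V_n^{[k]}\subseteq A_+$, a graded subspace of $A$. The index $k$ gives $V$, hence $T(V)$, a second graduation (the total $k$-degree of a monomial); as $V_n^{[k]}=(0)$ once $k\geq K(n)$, the homogeneous component of $n$-degree $n$ of $T(V)$ has bounded $k$-degree, so filtering $T(V)$ by its $k$-graduation makes it a \emph{locally finite} graded filtered algebra. Since every generator of $V_n^{[k]}$ lies in $A_n^{(k)}$, the canonical algebra morphism $\phi:T(V)\to A$ extending $V\hookrightarrow A$ is a morphism of graded filtered algebras. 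The main step is to compute $\gr(\phi)$. As the filtration of $T(V)$ is the one attached to a graduation, $\gr(T(V))$ is canonically $T(V)$ itself, as a bigraded algebra, while $\gr(A)\cong T(\bar V)$ by construction; through these identifications $\gr(\phi):T(V)\to T(\bar V)$ is an algebra morphism, by functoriality of $\gr$ and its compatibility with tensor products, and on a generator $v\in V_n^{[k]}$ it acts by $\gr(\phi)(v)=\pi^{(k)}(\phi(v))=\pi^{(k)}(v)=\bar v$. Hence $\gr(\phi)=T(\iota)$, the isomorphism induced by $\iota:V\xrightarrow{\ \sim\ }\bar V$, $v\mapsto\bar v$.

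It remains to deduce that $\phi$ is an isomorphism --- the standard passage from the associated graded object to a separated, respectively locally finite, filtered one. Injectivity: if $\phi(x)=0$ with $x\neq 0$, let $k$ be largest with $x\in T(V)^{(k)}$, which is finite since the filtration of $T(V)$ is separated; then $\gr(\phi)(\pi^{(k)}(x))=\pi^{(k)}(\phi(x))=0$ while $\pi^{(k)}(x)\neq 0$, contradicting the injectivity of $\gr(\phi)$. Surjectivity: for homogeneous $a\in A_n$, one proves $a\in\im\phi$ by downward induction on the filtration degree, the induction terminating because $A_n^{(K(n))}=(0)$; if $a\neq 0$ one lifts its leading symbol through $\gr(\phi)$ (which is onto) to an element $x\in T(V)$ of the same bidegree, so that $\phi(x)-a$ lies strictly deeper in the filtration, and applies the induction hypothesis to $\phi(x)-a$. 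Therefore $\phi:T(V)\to A$ is an isomorphism of graded algebras, and $A$ is free as a graded algebra. The points needing care are the choice of a bigraded complement $\bar V$ and the verification that $T(V)$, filtered by total $k$-degree, is locally finite; everything else reduces to the formal properties of $\gr$ already established, and no finite-dimensionality of the $A_n$ is used.
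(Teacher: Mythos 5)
Your proof is correct and follows essentially the same route as the paper: lift a bigraded complement of $\gr(A)_+^2$ from $\gr(A)$ to $A$, then use the lowest nonzero filtration component for injectivity (the paper's linear-independence step) and a decreasing induction on the filtration degree, terminating by local finiteness, for surjectivity (the paper's generation step). The only difference is presentational: you package the paper's explicit basis argument as the statement that $\gr(\phi)$ is an isomorphism for the morphism $\phi:T(V)\longrightarrow A$, where $T(V)$ is filtered by the total filtration degree of the generators.
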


\begin{proof}
Let us put for any $n,k$, $\displaystyle \gr(A)_n^{(k)}=\frac{A_n^{(k)}}{A_n^{(k+1)}}$.
Then $\gr(A)$ is the direct sum of the subspaces $\gr(A)_n^{(k)}$. Let us choose a subspace $W$ of $\gr(A)$ such that $\gr(A)_+=W\oplus \gr(A)_+^2$ and
\[W=\bigoplus_{k,n\in \N}W\cap \gr(A)_n^{(k)}.\]
The product of $\gr(A)$ will be denoted by $*$ in the sequel of this proof. By Proposition \ref{prop2.2}, $W$ freely generates $(\gr(A),*)$
Let us choose a basis $(w_i)_{i\in I}$ such that for any $i\in I$, $w_i\in \gr(A)_{n_i}^{(k_i)}$ for a certain $(k_i,n_i)\in \N$. We put then $w_i=\pi_A^{(k_i)}(v_i)$,
with $v_i\in A_{n_i}^{(k_i)}$. Let us prove that $(v_{i_1}\ldots v_{i_p})_{p\geq 0,\: i_1,\ldots,i_p\in I}$ is a basis of $A$
(which will prove that $A$ is freely generated by the elements $v_i$). 

Let $(a_{i_1,\ldots,i_p})_{k\geq 0,\: i_1,\ldots,i_p\in I}$ be a family of scalars with a finite, but not zero, number of non zero elements, such that
\[\sum_{p\in \N}\sum_{i_1,\ldots,i_p\in I} a_{i_1,\ldots,i_p}v_{i_1}\ldots v_{i_p}=0.\]
Let us put $l=\min\{k_{i_1}+\ldots+k_{i_p}\mid a_{i_1,\ldots,i_p}\neq 0\}$. Then 
\[0=\pi_A^{(l)}\left(\sum_{p\in \N}\sum_{i_1,\ldots,i_p\in I} a_{i_1,\ldots,i_p}v_{i_1}\ldots v_{i_p}\right)
=\sum_{\substack{i_1,\ldots,i_p\in I,\\ k_{i_1}+\ldots+k_{i_p}=l}} a_{i_1,\ldots,i_p}w_{i_1}*\ldots* w_{i_p}.\]
As $\gr(A)$ is freely generated by $W$, the coefficients $a_{i_1,\ldots,i_p}$, with $k_{i_1}+\ldots+k_{i_p}=l$,
are all zeros: this contradicts the definition of $l$. So $(v_{i_1}\ldots v_{i_p})_{p\geq 0,\: i_1,\ldots,i_p\in I}$ is linearly independent.
To prove that this is a basis, it is equivalent to prove that the elements $v_i$ generates the algebra $A$.
Let us denote by $A'$ the subalgebra of $A$ generated by these elements $v_i$ and let us prove that for any $n\in \N$,
for any $x\in A_n$, $x\in A'$. 
There exists a scalar $k$ such that $x\in A_n^{(k)}$:
we proceed by decreasing induction on $k$. If $k\geq K(n)$, as $A_n^{(K(n))}=(0)$, $x=0$ and the result is obvious.
Let us assume the result at all ranks $>k$. As $W$ generates $\gr(A)$, we can write
\[\pi_A^{(k)}(x)=\sum_{p\in \N} \sum_{i_1,\ldots,i_p\in I} b_{i_1,\ldots i_p}w_{i_1}*\ldots * w_{i_k}.\]
Then
\[x-\sum_{p\in \N}\sum_{i_1,\ldots,i_p\in I} b_{i_1,\ldots i_p}v_{i_1}\ldots w_{i_k} \in A_n^{(k+1)}.\]
By the induction hypothesis on $k$, $x\in A'$. 
\end{proof}

\begin{remark}
The conditions of connectedness and of local finiteness seem to be necessary. Here is an example. Consider $A=\K[[X]]$, with the filtration given by $A^{(k)}=X^k\K[[X]]$.
Then $\gr(A)$ is isomorphic to $\K[X]$, so is free, whereas $A$ is not. 
\end{remark}

\subsection{Free enveloping algebras}

\begin{prop} \label{prop2.4}
Let $\g$ be a Lie algebra and $V$ be a subspace of $\g$. The following conditions are equivalent:
\begin{enumerate}
\item $\g$ is freely generated, as a Lie algebra, by $V$.
\item $\calU(\g)$ is freely generated, as an algebra, by $V$.
\end{enumerate}
\end{prop}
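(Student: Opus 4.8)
The plan is to use the two universal properties at play: that of $T(V)$ as the free associative algebra on $V$, and that of $\calU(\g)$ as the enveloping algebra of $\g$, together with the fact that $T(V)=\calU(L(V))$, where $L(V)$ is the free Lie algebra on $V$ (this is the Poincar\'e--Birkhoff--Witt-type identification of the free associative algebra with the enveloping algebra of the free Lie algebra). Throughout, the inclusion $V\hookrightarrow \g\hookrightarrow \calU(\g)$ is fixed, and "freely generated" is understood relative to this inclusion.

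First I would prove $(1)\Rightarrow(2)$. Assume $\g=L(V)$. Then $\calU(\g)=\calU(L(V))=T(V)$ canonically, and $T(V)$ is by definition freely generated as an algebra by $V$. Concretely: given any algebra $B$ and any linear map $f:V\to B$, there is a unique algebra morphism $T(V)\to B$ extending $f$; transporting along $\calU(L(V))\cong T(V)$ gives the universal property of $\calU(\g)$ as a free algebra on $V$. So this direction is essentially immediate once the identification $\calU(L(V))=T(V)$ is granted.

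For $(2)\Rightarrow(1)$, assume $\calU(\g)$ is freely generated as an algebra by $V$, i.e. $\calU(\g)\cong T(V)$ via an isomorphism restricting to the identity on $V$. I would produce the universal property of $\g$ as a free Lie algebra directly. Let $\h$ be any Lie algebra and $f:V\to\h$ a linear map. Compose with $\h\hookrightarrow\calU(\h)$ to get a linear map $V\to\calU(\h)$; by the freeness of $\calU(\g)=T(V)$ as an algebra, this extends uniquely to an algebra morphism $\Phi:\calU(\g)\to\calU(\h)$. The key claim is that $\Phi$ restricts to a Lie algebra morphism $\g\to\h$. For this I would show $\Phi(\g)\subseteq\h$, using that $\g=\prim(\calU(\g))$ and $\h=\prim(\calU(\h))$ (the primitives of an enveloping algebra are exactly the Lie algebra, in characteristic zero), together with the fact that $\Phi$ is compatible with the coproducts: indeed $\Phi$ sends the generators $V$ into $\h$, which are primitive, and an algebra map determined on primitive generators is automatically a coalgebra map on $T(V)$ with its standard (shuffle-dual, i.e. "deconcatenation-free" cocommutative) Hopf structure — more precisely, both $\Delta_{\calU(\h)}\circ\Phi$ and $(\Phi\otimes\Phi)\circ\Delta_{\calU(\g)}$ are algebra morphisms $T(V)\to\calU(\h)\otimes\calU(\h)$ agreeing on $V$, hence equal by uniqueness. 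Thus $\Phi$ is a bialgebra morphism, so it maps primitives to primitives, giving $\bar f:=\Phi|_\g:\g\to\h$, a Lie algebra morphism extending $f$. Uniqueness of $\bar f$ follows because $\g$ generates $\calU(\g)$ as an algebra, so a Lie morphism $\g\to\h$ extending $f$ is forced on all of $\g$ by the already-established uniqueness at the algebra level. This establishes that $\g$ is freely generated as a Lie algebra by $V$.

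The main obstacle is the direction $(2)\Rightarrow(1)$, specifically the step showing $\Phi$ preserves primitives, which is where characteristic zero is genuinely used (via $\prim\calU(\g)=\g$); one must be careful that the coproduct on $T(V)\cong\calU(\g)$ transported from $\calU(\g)$ is the one for which $V$ is primitive, so that the uniqueness-of-algebra-maps argument identifies $\Phi$ as a bialgebra map. An alternative, perhaps cleaner, route for this direction: from $\calU(\g)\cong T(V)=\calU(L(V))$ as algebras, deduce an isomorphism of their spaces of primitives, hence $\g\cong L(V)$ as Lie algebras; but one still must check this isomorphism can be taken to restrict to the identity on $V$, which again reduces to the primitivity bookkeeping above. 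I would present the universal-property argument as the main proof since it makes the role of the inclusion $V\hookrightarrow\g$ transparent.
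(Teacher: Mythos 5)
Your proposal is correct and follows essentially the same route as the paper: for $(2)\Rightarrow(1)$ you extend $V\to\h\hookrightarrow\calU(\h)$ to an algebra map $\Phi:\calU(\g)\to\calU(\h)$, check on the generators $V$ that $\Delta\circ\Phi=(\Phi\otimes\Phi)\circ\Delta$, and use $\prim(\calU(\g))=\g$, $\prim(\calU(\h))=\h$ (characteristic zero) to restrict to a Lie morphism, with the same uniqueness argument via the enveloping-algebra extension. The only cosmetic difference is in $(1)\Rightarrow(2)$, where you invoke the identification $\calU(L(V))\cong T(V)$ instead of the paper's explicit two-step universal-property argument (linear map $\to$ Lie map into $A$ with commutator bracket $\to$ algebra map from $\calU(\g)$), which amounts to the same content.
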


\begin{proof}
$\Longrightarrow$. Let $A$ be an algebra and let $f:V\longrightarrow A$ be a linear map. 
Let us prove that there exists a unique algebra map $F:\calU(\g)\longrightarrow A$ such that $F_{\mid V}=f$. 
\textit{Existence.} As $\g$ is freely generated by $V$, there exists a (unique) Lie algebra map 
$\tilde{f}:\g\longrightarrow A$ such that $\tilde{f}_{\mid V}=f$. By the universal property of $\calU(\g)$, 
there exists a (unique) algebra map $F:\calU(\g)\longrightarrow A$ such that $F_{\mid \g}=\tilde{f}$. Hence, $F_{\mid V}=f$. 
\textit{Unicity}. Let $F':\mathcal{U}\longrightarrow A$ be an algebra morphism such that $F'_{\mid V}=f$. As $\g$ is generated by $V$, 
$F'_{\mid \g}=F_{\mid \g}$. As $\g$ generates $\calU(\g)$ as an algebra, $F'=F$.\\

$\Longleftarrow$. Let $\h$ be a Lie algebra and let $f:V\longrightarrow \h$ be a linear map. 
Let us prove that there exists a unique Lie algebra map $F:\g\longrightarrow \h$
such that $F_{\mid V}=f$. \textit{Existence}. By the universal property of $\calU(\g)$, there exists a unique algebra map 
$\tilde{F}:\calU(\g)\longrightarrow \calU(\h)$, 
such that $\tilde{F}_{\mid V}=f$. As $V\subseteq \h$, for any $v\in V$,
\[\Delta\circ \tilde{F}(v)=\tilde{F}(v)\otimes 1+1\otimes \tilde{F}(v)=(\tilde{F}\otimes \tilde{F})\circ \Delta(v).\]
As $V$ generates $\calU(\g)$, $\tilde{F}$ is a bialgebra morphism. Therefore, it induces a Lie algebra morphism from 
$\prim(\calU(\g))=\g$ to  $\prim(\calU(\h))=\h$, which restriction $F$ to $V$ is $f$. 
\textit{Unicity}. Let $F':\g \longrightarrow\h$ be another Lie algebra morphism such that $F'_{\mid V}=f$. 
By the universal property of $\calU(\g)$, $F'$ is extended as an algebra morphism $\tilde{F}'$
from $\calU(\g)$ to $\calU(\h)$. By construction, $\tilde{F}'_{\mid V}=f$. As $V$ generates $\calU(\g)$, $\tilde{F'}=\tilde{F}$. 
Therefore, $F'=\tilde{F}'_{\mid \g}=F_{\mid \g}=F$.
\end{proof}

\begin{lemma}\label{lem2.5}
Let $\g$ be a Lie algebra. Then, in $\calU(\g)$, $\g\cap \ker(\varepsilon)^2=[\g,\g]$.
\end{lemma}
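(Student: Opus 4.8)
The plan is to work with the Poincaré--Birkhoff--Witt decomposition of $\calU(\g)$ and the canonical projection onto the augmentation ideal. Write $\calU = \calU(\g)$ and $\calU_+ = \ker(\varepsilon)$, so that $\calU = \K 1 \oplus \calU_+$. Recall that $\calU_+$ is spanned by products $x_1 \cdots x_k$ with $k \geq 1$ and $x_i \in \g$, and that $\calU_+^2$ is spanned by such products with $k \geq 2$. The inclusion $[\g,\g] \subseteq \g \cap \calU_+^2$ is immediate: if $x, y \in \g$ then $[x,y] = xy - yx \in \calU_+^2$, and $[x,y] \in \g$ by definition. So the whole content is the reverse inclusion $\g \cap \calU_+^2 \subseteq [\g,\g]$.

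For the reverse inclusion, first I would invoke PBW. Fix a basis $(x_i)_{i \in I}$ of $\g$ adapted to a totally ordered index set $I$ and such that it restricts to a basis of $[\g,\g]$; more precisely, choose a complement $\mathfrak{m}$ of $[\g,\g]$ in $\g$, pick a basis $(y_j)_{j \in J}$ of $\mathfrak{m}$ and a basis $(z_l)_{l \in L}$ of $[\g,\g]$, and totally order the combined index set. Then PBW gives that the ordered monomials $x_{i_1} \cdots x_{i_k}$ (with $i_1 \leq \cdots \leq i_k$, $k \geq 0$) form a basis of $\calU$. Now consider the linear projection $p : \calU \longrightarrow \g$ that sends the degree-one ordered monomials $x_i$ to themselves and kills all ordered monomials of length $\neq 1$ (including $1$). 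The key computation is that $p$ vanishes on $\calU_+^2$: rewriting a product of two elements of $\calU_+$ in the PBW basis, using the commutation relations $x_i x_j = x_j x_i + [x_i,x_j]$ repeatedly, only ever produces ordered monomials of length $\geq 1$, and the length-$1$ terms that appear are brackets of basis elements, i.e. elements of $[\g,\g]$. So for $a \in \calU_+^2$, the length-$1$ part of $a$ in the PBW basis lies in $[\g,\g]$. If moreover $a \in \g$, then $a$ \emph{equals} its own length-$1$ part, so $a \in [\g,\g]$.

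I would organize the core step cleanly as follows: define $\pi : \calU \to \calU$ to be the projection onto the span of length-$1$ PBW monomials along all the others, so $\pi(\calU) = \g$ and $\pi$ restricts to the identity on $\g$. Prove by induction on the number of "inversions" needed to reorder a monomial that for $x_{i_1} \cdots x_{i_k}$ with $k \geq 2$ and all $i_r$ arbitrary, $\pi(x_{i_1}\cdots x_{i_k}) \in [\g,\g]$; the base case is an already-ordered monomial of length $\geq 2$, whose $\pi$-image is $0$, and the inductive step swaps an adjacent inverted pair, producing one monomial with fewer inversions of the same length $k \geq 2$ plus one monomial of length $k-1$ whose coefficient is a bracket — the latter's $\pi$-image is $0$ if $k - 1 \geq 2$ and is literally a bracket $[x_{i_r}, x_{i_{r+1}}] \in [\g,\g]$ if $k - 1 = 1$, and we then apply the induction hypothesis (in length) to absorb that term as well. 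Extending linearly, $\pi(\calU_+^2) \subseteq [\g,\g]$, and restricting to $a \in \g \cap \calU_+^2$ gives $a = \pi(a) \in [\g,\g]$.

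The main obstacle is bookkeeping in that induction: one must be careful that "reordering" a product of $k \geq 2$ factors never leaks a nonzero scalar multiple of $1$ (it does not, since every factor lies in $\g \subseteq \calU_+$ and each commutator correction $[x_i,x_j]$ again lies in $\g$, so no constant term is ever created), and that one correctly tracks which length-$1$ terms arise and why they are brackets rather than arbitrary elements of $\g$. An alternative that sidesteps explicit PBW reordering: use that $\g / [\g,\g]$ is an abelian Lie algebra with enveloping algebra the symmetric algebra $S(\g/[\g,\g])$, and that the quotient map $\g \to \g/[\g,\g]$ extends to an algebra map $q : \calU(\g) \to S(\g/[\g,\g])$; then $q$ kills $[\g,\g]$, sends $\calU_+$ into the augmentation ideal $S_+$ of $S(\g/[\g,\g])$, and the composite $\g \hookrightarrow \calU(\g) \xrightarrow{q} S(\g/[\g,\g])$ has image $\g/[\g,\g]$, which meets $S_+^2$ trivially since $\g/[\g,\g]$ sits in degree $1$; hence $a \in \g \cap \calU_+^2$ implies $q(a) \in (\g/[\g,\g]) \cap S_+^2 = (0)$, i.e. $a \in \ker q \cap \g = [\g,\g]$, the last equality because $q$ restricted to $\g$ is exactly the projection $\g \to \g/[\g,\g]$ via the PBW identification $\g = \prim$. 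I would present the $q$-argument as the main proof, as it is the shortest and most robust.
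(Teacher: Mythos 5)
Your preferred argument (the map $q:\calU(\g)\to S(\g/[\g,\g])$ induced by the quotient $\g\to\g/[\g,\g]$, with $(\g/[\g,\g])\cap S_+^2=(0)$ by the grading of the symmetric algebra) is exactly the paper's proof, and it is correct as you state it, including the needed remark that $q$ restricted to $\g$ is the canonical projection so that $\ker(q)\cap\g=[\g,\g]$. The PBW reordering sketch you give as a backup would also work, but since you present the $q$-argument as the main proof, your solution coincides with the paper's.
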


\begin{proof} 
$\supseteq$: immediate. $\subseteq$: let $\phi:\calU(\g)\longrightarrow \calU\left(\dfrac{\g}{[\g,\g]}\right)$ be the bialgebra morphism,
unique extension of the canonical surjection from $\g$ to $\dfrac{\g}{[\g,\g]}$. As $\dfrac{\g}{[\g,\g]}$ is an abelian Lie algebra, $\calU\left(\dfrac{\g}{[\g,\g]}\right)$
is the symmetric algebra $S\left(\dfrac{\g}{[\g,\g]}\right)$. Let $x\in \g\cap \ker(\varepsilon)^2$. Then $\phi(x)\in \dfrac{\g}{[\g,\g]}\cap \ker(\varepsilon)^2=(0)$,
in the symmetric algebra $S\left(\dfrac{\g}{[\g,\g]}\right)$. So $x\in \ker(\phi)\cap\g=[\g,\g]$. 
\end{proof}

\begin{cor}\label{cor2.6}
Let $\g$ be a graded, free Lie algebra with $\g_0=(0)$ and let $V\subseteq \g$ be a graded subspace such that 
$\g=V\oplus [\g,\g]$. Then $V$ freely generates $\g$.
\end{cor}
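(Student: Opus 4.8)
The plan is to leverage Proposition \ref{prop2.4}, which says that $\g$ is freely generated by $V$ as a Lie algebra if and only if $\calU(\g)$ is freely generated by $V$ as an associative algebra. So it suffices to show that $V$ freely generates $\calU(\g)$. Since $\g$ is free as a Lie algebra, Proposition \ref{prop2.4} also tells us that $\calU(\g)$ is free as an algebra; moreover $\calU(\g)$ is graded and connected because $\g$ is graded with $\g_0=(0)$ (using the graduation on $\calU(\g)$ described in the Example following Definition of graded bialgebras). Thus I would invoke Proposition \ref{prop2.2}: for a graded connected free algebra $A=\calU(\g)$, any graded subspace $V'$ with $A_+=V'\oplus A_+^2$ freely generates $A$. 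So the whole corollary reduces to checking that $V$, viewed inside $\calU(\g)$, is such a complement, i.e. that
\[\calU(\g)_+ = V \oplus \calU(\g)_+^2,\]
where $\calU(\g)_+=\ker(\varepsilon)$ is the augmentation ideal.

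First I would record that $V$ is a graded subspace of $\calU(\g)_+$: indeed $V\subseteq\g\subseteq\ker(\varepsilon)$ since $\g_0=(0)$ forces every element of $\g$ to be primitive with zero counit. The heart of the argument is then to identify $\calU(\g)_+^2\cap\g$ and to relate $\calU(\g)_+$ to $\g\oplus\calU(\g)_+^2$. For the first, Lemma \ref{lem2.5} gives exactly $\g\cap\ker(\varepsilon)^2=[\g,\g]$. For the second, I would use the Poincar\'e--Birkhoff--Witt theorem: a basis of $\calU(\g)$ is given by ordered monomials $x_{i_1}\cdots x_{i_k}$ in a homogeneous basis of $\g$, so $\ker(\varepsilon)=\g\oplus\ker(\varepsilon)^2$, the second summand being spanned by the monomials of length $\geq 2$. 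Combining: $\calU(\g)_+ = \g \oplus \calU(\g)_+^2$, and intersecting the decomposition $\g = V\oplus[\g,\g]$ with this, together with $[\g,\g]\subseteq\calU(\g)_+^2$ (immediate inclusion) and $[\g,\g]=\g\cap\calU(\g)_+^2$ from Lemma \ref{lem2.5}, yields
\[\calU(\g)_+ = V \oplus [\g,\g] \oplus \calU(\g)_+^2 = V \oplus \calU(\g)_+^2,\]
where the last equality uses $[\g,\g]\subseteq\calU(\g)_+^2$. Everything respects the graduation since $V$, $[\g,\g]$, $\g$ and the powers of the augmentation ideal are all graded.

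The step I expect to be the main obstacle—or at least the one requiring the most care—is the decomposition $\ker(\varepsilon)=\g\oplus\ker(\varepsilon)^2$ in $\calU(\g)$, i.e. showing that $\g$ maps to a complement of $\ker(\varepsilon)^2$. This is where PBW enters, and one must be careful that the graded PBW basis genuinely splits $\ker(\varepsilon)$ into the length-one part $\g$ and the length-$\geq 2$ part; the fact that $\g$ may be infinite-dimensional and that the graduation need not be locally finite means I would phrase this purely in terms of a homogeneous Hamel basis of $\g$ and the induced monomial basis of $\calU(\g)$, avoiding any completion. Once this splitting and Lemma \ref{lem2.5} are in hand, assembling the complement and quoting Propositions \ref{prop2.2} and \ref{prop2.4} is routine.
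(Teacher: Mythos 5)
Your overall route is the paper's: reduce via Proposition \ref{prop2.4} to showing that $V$ freely generates $\calU(\g)$, then via Proposition \ref{prop2.2} (noting $\calU(\g)$ is graded and connected) to the single identity $\ker(\varepsilon)=V\oplus\ker(\varepsilon)^2$, with Lemma \ref{lem2.5} as the key input. But the step you yourself single out as the crux is stated wrongly, and as stated it is false. You claim that PBW gives $\ker(\varepsilon)=\g\oplus\ker(\varepsilon)^2$, with $\ker(\varepsilon)^2$ spanned by the PBW monomials of length $\geq 2$. This contradicts the very lemma you quote: by Lemma \ref{lem2.5}, $\g\cap\ker(\varepsilon)^2=[\g,\g]$, which is nonzero as soon as $\g$ is nonabelian (in particular for a free Lie algebra on at least two generators); concretely, $[x,y]=xy-yx\in\ker(\varepsilon)^2$ for $x,y\in\g$, and this is a Lie element, hence of PBW length one. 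So $\ker(\varepsilon)^2$ strictly contains the span of the length-$\geq 2$ monomials (it equals that span plus $[\g,\g]$), and the sum $\g+\ker(\varepsilon)^2$ is not direct. The same inconsistency infects your displayed decomposition $\calU(\g)_+=V\oplus[\g,\g]\oplus\calU(\g)_+^2$: since $[\g,\g]\subseteq\calU(\g)_+^2$, these cannot be independent direct summands unless $[\g,\g]=(0)$.

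The repair is exactly what the paper does, and it is simpler than what you attempted: one only needs the non-direct sum $\ker(\varepsilon)=\g+\ker(\varepsilon)^2$, which is immediate because $\g$ generates $\calU(\g)$ as an algebra (every monomial of length $\geq 2$ in elements of $\g$ lies in $\ker(\varepsilon)^2$), so no PBW basis is required. Then $\g=V\oplus[\g,\g]$ together with $[\g,\g]\subseteq\ker(\varepsilon)^2$ gives $\ker(\varepsilon)=V+\ker(\varepsilon)^2$, and this sum is direct because $V\cap\ker(\varepsilon)^2\subseteq\g\cap\ker(\varepsilon)^2=[\g,\g]$ by Lemma \ref{lem2.5}, while $V\cap[\g,\g]=(0)$ by hypothesis. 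With this correction (replace your direct sums by plain sums and argue directness only at the last step), your argument coincides with the paper's proof.
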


\begin{proof}
By Proposition \ref{prop2.4}, we know that $\calU(\g)$ is a free algebra. By the same proposition, 
it is enough to prove that $V$ freely generates $\calU(\g)$. As $\g$ is graded, $\calU(\g)$ is also graded, and connected as $\g_0=(0)$. 
By Proposition \ref{prop2.2}, it is enough to prove that $\ker(\varepsilon)=V\oplus \ker(\varepsilon)^2$.
By Lemma \ref{lem2.5}, $\g\cap \ker(\varepsilon)^2=[\g,\g]$. As $\ker(\varepsilon)=\g+\ker(\varepsilon)^2$, we obtain the result. 
\end{proof}

\section{Filtration of a graded connected bialgebra}

\subsection{Counital filtration of a graded connected bialgebra}

\begin{prop}\label{prop3.1}
Let $H$ be a graded connected bialgebra. We put, for all $k\in \N$, $H^{(k)}=\ker(\varepsilon)^k$. 
This makes $H$ a graded filtered locally finite bialgebra.
This filtration is called the counital filtration of $H$. 
\end{prop}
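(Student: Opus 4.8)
The plan is to verify one after another the axioms of Definitions~\ref{defi1.5} and~\ref{defi1.7}, with the convention $\ker(\varepsilon)^0=H$. The starting point is that, $H$ being graded and connected, its counit satisfies $\varepsilon(1_H)=1$ and $\varepsilon(H_n)=(0)$ for $n\geq 1$, so $\ker(\varepsilon)=H_+=\bigoplus_{n\geq 1}H_n$ is a \emph{graded} subspace of $H$. Since $m(H_a\otimes H_b)\subseteq H_{a+b}$, the power $H^{(k)}=\ker(\varepsilon)^k=H_+^{\,k}$ is again a graded subspace of $H$, with homogeneous component $H_n^{(k)}=H_n\cap H_+^{\,k}$ equal to the span of the products $H_{a_1}\cdots H_{a_k}$ over all $a_1+\ldots+a_k=n$ with every $a_i\geq 1$. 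In particular $H_n^{(k)}=(0)$ whenever $k>n$. This yields at once: the third bullet of Definition~\ref{defi1.5}, namely $H^{(k)}=\bigoplus_n H_n^{(k)}$; the fact that $(H^{(k)})_{k\in\N}$ is a decreasing filtration with $H^{(0)}=H$; and local finiteness, with $K(n)=n+1$.

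Next I would check that $H$ is a graded filtered algebra. One has $1_H\in H_0\cap H=H_0^{(0)}$. As $H_+$ is the kernel of the algebra morphism $\varepsilon$, it is a two-sided ideal, hence so is every $H_+^{\,i}$; therefore $H\cdot H_+^{\,j}=H_+^{\,j}$ for all $j$, and $m(H_+^{\,k}\otimes H_+^{\,l})\subseteq H_+^{\,k+l}$ for all $k,l\in\N$ (with the convention $H_+^{\,0}=H$). Intersecting with the grading, $m(H_m^{(k)}\otimes H_n^{(l)})\subseteq H_+^{\,k+l}\cap H_{m+n}=H_{m+n}^{(k+l)}$, which is the required inclusion.

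Then the graded filtered coalgebra axioms. If $(k,n)\neq(0,0)$ then $H_n^{(k)}\subseteq\ker(\varepsilon)$ (for $k\geq 1$ because $H_n^{(k)}\subseteq H_+^{\,k}\subseteq H_+$, and for $k=0$, $n\geq 1$ because $H_n^{(0)}=H_n$), so $\varepsilon(H_n^{(k)})=(0)$. For the coproduct, I would first prove $\Delta(H_+^{\,k})\subseteq\sum_{i=0}^k H_+^{\,i}\otimes H_+^{\,k-i}$ by induction on $k$: for $k\leq 1$ this is the standard fact that for $x\in H_+$ one has $\Delta(x)=x\otimes 1+1\otimes x+\tilde\Delta(x)$ with $\tilde\Delta(x)=\Delta(x)-x\otimes 1-1\otimes x\in H_+\otimes H_+$ (this is where connectedness is used), whence $\Delta(H_+)\subseteq H_+\otimes H+H\otimes H_+$; for the inductive step one writes $H_+^{\,k+1}=m(H_+\otimes H_+^{\,k})$, uses that $\Delta$ is an algebra morphism, and the identities $H_+\cdot H_+^{\,i}=H_+^{\,i+1}$ and $H\cdot H_+^{\,j}=H_+^{\,j}$. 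Finally, each $H_+^{\,i}\otimes H_+^{\,k-i}$ is a graded subspace of $H\otimes H=\bigoplus_{a,b}H_a\otimes H_b$, hence so is $\sum_{i=0}^k H_+^{\,i}\otimes H_+^{\,k-i}$; since moreover $\Delta$ is a graded map ($\Delta(H_n)\subseteq\bigoplus_{j=0}^n H_j\otimes H_{n-j}$), for $x\in H_n^{(k)}$ the element $\Delta(x)$ lies in the degree-$n$ part of $\sum_{i=0}^k H_+^{\,i}\otimes H_+^{\,k-i}$, which is exactly $\sum_{i=0}^k\sum_{j=0}^n H_j^{(i)}\otimes H_{n-j}^{(k-i)}$; this is the last axiom.

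The only mildly delicate point is this coproduct estimate, together with the manipulation of the two decompositions of $H\otimes H$ (along the grading and along the filtration) needed to conclude; everything else is routine bookkeeping of graded two-sided ideals. Connectedness enters precisely at the equality $\ker(\varepsilon)=H_+$ and at the inclusion $\tilde\Delta(H_+)\subseteq H_+\otimes H_+$.
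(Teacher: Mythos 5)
Your proof is correct and takes essentially the same route as the paper: identify $\ker(\varepsilon)=H_+$, use homogeneity of products to obtain the decomposition $H^{(k)}=\bigoplus_n H_n^{(k)}$ and local finiteness ($H_n^{(k)}=(0)$ for $k>n$), and deduce the coproduct estimate from $\Delta(x)-x\otimes 1-1\otimes x\in \ker(\varepsilon)\otimes\ker(\varepsilon)$ together with multiplicativity of $\Delta$. The only cosmetic differences are that you prove $\Delta(\ker(\varepsilon)^k)\subseteq\sum_{i=0}^k\ker(\varepsilon)^i\otimes\ker(\varepsilon)^{k-i}$ by induction on $k$ instead of via the paper's explicit expansion (\ref{eq1}), and you spell out the interaction of the filtration with the grading in the coalgebra axiom a bit more explicitly.
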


\begin{proof}
Obviously, $H^{(0)}=H$; for any $k\in \N$, $H^{(k+1)}\subseteq H^{(k)}$; for any $k,l\in \N$, $m(H^{(k)}\otimes H^{(l)})\subseteq H^{(k+l)}$; 
and for any $n\geq 1$, $\varepsilon\left(H^{(k)}\right)=(0)$. 
For any $k$, $H^{(k)}$ is the subspace of $H$ generated by the products $x_1\ldots x_k$, with $x_i\in \ker(\varepsilon)$ for any $i$.
As $\displaystyle \ker(\varepsilon)=\bigoplus_{n\geq 1} H_n$, we obtain by multilinearity that $H^{(k)}$ 
is generated by the products $x_1\ldots x_k$, such that for any $i$, $x_i$ is homogeneous of degree $\geq 1$. By homogeneity of the product, 
we obtain that for any $k\in \N$, 
\[H^{(k)}=\bigoplus_{n=0}^\infty H^{(k)}\cap H_n.\]
We put $H_n^{(k)}=H^{(k)}\cap H_n$. Then $H^{(k)}_n$ is generated  by the products $x_1\ldots x_k$, such that for any $i$, $x_i$ is homogeneous of degree $n_i\geq 1$, 
with $n_1+\ldots+n_k=n$. If $k>n$, there is no such products, so $H_n^{(k)}=(0)$, and $H$ is locally finite. It also implies that for any $k\in \N$,
\[H^{(k)}\subseteq \bigoplus_{n\geq H} H_n,\]
which implies that  $\displaystyle \bigcap_{k\in \N} H_k=(0)$. 

Let us prove that for any $n\in \N$, $\displaystyle \Delta\left(H^{(n)}\right)\subseteq \sum_{k+l=n}H^{(k)}\otimes H^{(l)}$.. The result is obvious if $k=0$. If $x\in \ker(\varepsilon)$, then
\[\Delta(x)-x\otimes 1-1\otimes x\in \ker(\varepsilon)\otimes \ker(\varepsilon).\]
so
\[\Delta(x)-1\otimes x-x\otimes 1\in H^{(1)}\otimes H^{(1)}.\]
Therefore, if $x_1,\ldots,x_n \in \ker(\varepsilon)$,
\begin{align}
\label{eq1}
\Delta(x_1\ldots x_n)-\sum_{I\subseteq [n]} \left(\prod_{i\in I} x_i\right)\otimes \left(\prod_{i\notin I} x_i\right)\in \sum_{k+l>n} H^{(k)}\otimes H^{(l)},
\end{align}
which gives the result.
\end{proof}

As a consequence, $H$ is a bialgebra in the category of graded filtered spaces, and is locally finite as a graded filtered space. 
Using the functor $\gr$ and its compatibility with the tensor products, $\gr(H)$ is a graded bialgebra. Its product is denoted by $m^{gr}$ and its coproduct by $\Delta^{gr}$. 

\begin{prop}\label{prop3.2}
If $H$ is a graded connected bialgebra, free as an algebra. We give it its counital filtration. Then $(H,m)$ and $(\gr(H),m^{gr})$ are isomorphic as graded algebras.
\end{prop}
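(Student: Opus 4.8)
The plan is to use the freeness of $H$ to reduce everything to a tensor algebra, and then to recognise the counital filtration of a free algebra as the length filtration, for which the associated graded object is visibly the tensor algebra again. First I would invoke Proposition~\ref{prop2.2}: since $H$ is free as a graded algebra and $H_0=\K 1_H$, any graded subspace $V$ with $H_+=V\oplus H_+^2$ freely generates $H$. Fixing a homogeneous basis $(v_i)_{i\in I}$ of such a $V$, with $v_i\in H_{n_i}$ and necessarily $n_i\geq 1$, I identify $H$ with $T(V)$ as a graded algebra; a basis of $H$ is then given by the words $v_{i_1}\cdots v_{i_p}$ with $p\geq 0$, each of which is bihomogeneous, of length $p$ and of degree $n_{i_1}+\cdots+n_{i_p}$.

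Next I would identify the counital filtration on $T(V)$ with the filtration by word length. Since each $v_i\in H_+$, a word of length $p$ lies in $H_+^p=H^{(p)}$; conversely, $H^{(k)}=H_+^k$ is the linear span of products $x_1\cdots x_k$ with $x_i\in H_+$, hence, expanding each $x_i$ in the word basis, of words of length $\geq k$, and every word of length $q\geq k$ is indeed such a product (split it into $k$ nonempty blocks). Because the words form a basis of $H$, this gives $H^{(k)}=\bigoplus_{p\geq k}V^{\otimes p}$; in particular $H^{(k)}_n=(0)$ for $k>n$, and for all $n,k$ the quotient $H_n^{(k)}/H_n^{(k+1)}$ has as a basis the classes of the words of length exactly $k$ and degree $n$.

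Then I would write down the isomorphism explicitly. Sending, for each word $v_{i_1}\cdots v_{i_k}$ of degree $n$, the class $\pi_H^{(k)}(v_{i_1}\cdots v_{i_k})\in\gr(H)_n$ to the element $v_{i_1}\cdots v_{i_k}\in H_n$ matches, by the previous step, the chosen basis of $\gr(H)$ with that of $H$, hence defines a graded linear isomorphism $\Phi\colon\gr(H)\to H$. It is an algebra morphism: by definition of $m^{gr}$ one has $m^{gr}\bigl(\pi_H^{(k)}(u),\pi_H^{(l)}(w)\bigr)=\pi_H^{(k+l)}(uw)$, and when $u,w$ are words of lengths $k,l$ the product $uw$ is again a word, of length $k+l$, so $\Phi\bigl(m^{gr}(\pi_H^{(k)}(u),\pi_H^{(l)}(w))\bigr)=uw=\Phi(\pi_H^{(k)}(u))\,\Phi(\pi_H^{(l)}(w))$; bilinearity extends this to all of $\gr(H)$.

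The only delicate point is the second step: one must check that no collapsing occurs, namely that a word of length $<k$ never lies in $H^{(k)}$ and that the words of length exactly $k$ stay linearly independent modulo $H^{(k+1)}$. Both are immediate consequences of the fact that the monomials $v_{i_1}\cdots v_{i_p}$ form a basis of $H$ — which is exactly where the hypothesis that $H$ is \emph{free} enters — together with homogeneity with respect to the length grading. I do not expect any genuine difficulty beyond bookkeeping once this point is isolated.
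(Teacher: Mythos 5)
Your proposal is correct and follows essentially the same route as the paper: choose a graded complement $V$ of $\ker(\varepsilon)^2$ in $\ker(\varepsilon)$, use Proposition~\ref{prop2.2} to see that $V$ freely generates $H$, identify the counital filtration with the word-length filtration, and match the word basis of $H$ with the basis $\bigl(\pi_H^{(k)}(v_{i_1}\ldots v_{i_k})\bigr)$ of $\gr(H)$. The only (cosmetic) difference is that you build the isomorphism explicitly from $\gr(H)$ to $H$ and check multiplicativity on basis words, whereas the paper uses the universal property of the free algebra $H$ to produce the map $H\to\gr(H)$ sending $v_i$ to $\pi_H^{(1)}(v_i)$.
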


\begin{proof}
Let us choose a complement $V_n$ of $H^{(2)}_n$ into $H^{(1)}_n$. By definition of $H^{(2)}$ and $H^{(1)}$, 
putting $V=\displaystyle \bigoplus_{n=1}^\infty V_n$, then $\ker(\varepsilon)=V\oplus \ker(\varepsilon)^2$, so $V$ freely generates $H$ as an algebra by Proposition \ref{prop2.2}. 
Let us choose a basis $(v_i)_{i\in I}$ of $V$ made of homogeneous elements.
A basis of $H^{(k)}$ is given by $(v_{i_1}\ldots v_{i_l})_{l\geq k, i_1,\ldots,i_l\in I}$. Therefore, a basis of $\gr(H)$ is given by 
$(\pi_H^{(k)}(v_{i_1}\ldots v_{i_k}))_{k\geq 0,\: i_1,\ldots,i_k\in I}$.
The unique algebra morphism from $H$ to $\gr(H)$ (implied by the freeness of $H$) sending $v_i$ to $\pi_H^{(1)}(v_i)$ for any $i\in I$ sends the basis 
$(v_{i_1}\ldots v_{i_k})_{k\geq 0,\: i_1,\ldots,i_k\in I}$ to the basis $(\pi_H^{(k)}(v_{i_1}\ldots v_{i_k}))_{k\geq 0,\: i_1,\ldots,i_k\in I}$, so it is an isomorphism. 
\end{proof}

\begin{prop}\label{prop3.3}
Let $H$ is a graded connected bialgebra. We give it its counital filtration. Then $\gr(H)$ is cocommutative. 
\end{prop}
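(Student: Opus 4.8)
The plan is to produce a generating set of the algebra $\gr(H)$ consisting of primitive elements of $\gr(H)$: cocommutativity will then follow by a purely formal argument. First I would describe such a generating set. By Proposition \ref{prop3.1}, for every $k\in\N$ the space $H^{(k)}=\ker(\varepsilon)^k$ is spanned by products $x_1\cdots x_k$ with each $x_i$ a homogeneous element of $\ker(\varepsilon)=H^{(1)}$; moreover, by the very definition of the product $m^{gr}$ of $\gr(H)$ (obtained by applying $\gr$ to $m$), one has $\pi_H^{(1)}(x_1)\cdots\pi_H^{(1)}(x_k)=\pi_H^{(k)}(x_1\cdots x_k)$. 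Hence the classes $\pi_H^{(1)}(x)$, for $x\in\ker(\varepsilon)$, generate $\gr(H)$ as an algebra.

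Next I would compute the coproduct $\Delta^{gr}=\gr(\Delta)$ on such a generator. Since $\Delta$ is a graded filtered morphism (Proposition \ref{prop3.1}), under the identification $\gr(H\otimes H)=\gr(H)\otimes\gr(H)$ we get $\Delta^{gr}(\pi_H^{(1)}(x))=\pi_{H\otimes H}^{(1)}(\Delta(x))$. Writing $\Delta(x)=x\otimes 1+1\otimes x+r$ with $r\in\ker(\varepsilon)\otimes\ker(\varepsilon)=H^{(1)}\otimes H^{(1)}$ (as already noted in the proof of Proposition \ref{prop3.1}), I observe that $x\otimes 1$ and $1\otimes x$ lie in $(H\otimes H)^{(1)}$ whereas $r\in(H\otimes H)^{(2)}$, so $r$ maps to $0$ in $(H\otimes H)^{(1)}/(H\otimes H)^{(2)}$. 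Therefore $\Delta^{gr}(\pi_H^{(1)}(x))=\pi_H^{(1)}(x)\otimes\pi_H^{(0)}(1)+\pi_H^{(0)}(1)\otimes\pi_H^{(1)}(x)$, i.e. every algebra generator of $\gr(H)$ is primitive in $\gr(H)$.

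Finally, both $\Delta^{gr}$ and $\tau\circ\Delta^{gr}$, where $\tau$ is the flip of $\gr(H)\otimes\gr(H)$ (which is an algebra automorphism for the tensor-product algebra structure), are morphisms of graded algebras from $\gr(H)$ to $\gr(H)\otimes\gr(H)$. Hence $\{a\in\gr(H)\mid\tau\circ\Delta^{gr}(a)=\Delta^{gr}(a)\}$ is a subalgebra of $\gr(H)$, and by the previous step it contains all the generators $\pi_H^{(1)}(x)$; consequently it equals $\gr(H)$, which is exactly the assertion that $\gr(H)$ is cocommutative. The only slightly delicate point I anticipate is the middle step: tracking which filtration degree each summand of $\Delta(x)$ contributes under the isomorphism $\gr(H\otimes H)\cong\gr(H)\otimes\gr(H)$. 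This, however, is precisely the compatibility of the functor $\gr$ with tensor products established above, so I do not expect a genuine obstacle.
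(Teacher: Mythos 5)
Your proof is correct, and it takes a somewhat different route from the paper's. The paper argues by direct computation: it takes a class $\pi_H^{(k)}(x_1\ldots x_k)$ with $x_i\in\ker(\varepsilon)$ and applies the explicit congruence (\ref{eq1}) established in the proof of Proposition \ref{prop3.1}, obtaining $\Delta^{gr}\bigl(\pi_H^{(k)}(x_1\ldots x_k)\bigr)=\sum_{I\subseteq[k]}\pi_H^{(|I|)}\bigl(\prod_{i\in I}x_i\bigr)\otimes\pi_H^{(k-|I|)}\bigl(\prod_{i\notin I}x_i\bigr)$, which is visibly symmetric. You instead use only the $k=1$ case of that computation ($\Delta(x)-x\otimes 1-1\otimes x\in H^{(1)}\otimes H^{(1)}\subseteq(H\otimes H)^{(2)}$), conclude that the algebra generators $\pi_H^{(1)}(x)$ of $\gr(H)$ are primitive, and then propagate cocommutativity by the standard equalizer argument, using that $\Delta^{gr}$ and $\tau\circ\Delta^{gr}$ are both algebra morphisms (legitimate, since the paper has already established that $\gr(H)$ is a graded bialgebra via the compatibility of $\gr$ with tensor products, and the flip is an algebra automorphism of $\gr(H)\otimes\gr(H)$ without any commutativity hypothesis). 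The trade-off: the paper's computation gives an explicit formula for $\Delta^{gr}$ on the spanning classes, while your argument is more structural and in fact proves slightly more, namely that $\gr(H)$ is generated as an algebra by primitive elements — a statement that already points toward the later application of Cartier--Quillen--Milnor--Moore's theorem in the proof of Theorem \ref{theo4.1}. Your identification $\pi_{H\otimes H}^{(1)}(x\otimes 1)=\pi_H^{(1)}(x)\otimes\pi_H^{(0)}(1_H)$ is exactly the isomorphism $\theta$ constructed in Section 1, so the "delicate point" you flag is indeed covered.
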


\begin{proof}
Let $x\in \dfrac{H^{(k)}}{H^{(k+1)}}$. We can assume that $x=\pi_H^{(k)}(x_1\ldots x_k)$, with $x_1,\ldots,x_k\in \ker(\varepsilon)$. By (\ref{eq1}), we obtain that
\begin{align*}
\Delta^{gr}(x)&=\pi_{H\otimes H}^{(k)}(\Delta(x_1\ldots x_n))\\
&=\pi_{H\otimes H}^{(k)}\left(\sum_{I\subseteq [n]} \left(\prod_{i\in I} x_i\right)\otimes \left(\prod_{i\notin I} x_i\right)\right)+0\\
&=\sum_{I\subseteq [n]} \pi_H^{(|I|)} \left(\prod_{i\in I} x_i\right)\otimes \pi_H^{(k-|I|)}\left(\prod_{i\notin I} x_i\right).
\end{align*}
It is clearly cocommutative. \end{proof}

\subsection{Graded filtered connected Lie algebras}

\begin{lemma}\label{lem3.4}
Let $H$ be a graded filtered bialgebra. For any $k\in \N$, we put $\prim(H)^{(k)}=\g\cap H^{(k)}$.
This makes $\prim(H)$ a graded filtered Lie algebra. Moreover, $\gr(\prim(H))$ is a Lie subalgebra of $\prim(\gr(H))$.
\end{lemma}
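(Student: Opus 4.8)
The plan is to check, one by one, the three axioms in Definition \ref{defi1.7}(4) for $\prim(H)$ equipped with the graduation $\prim(H)_n=\prim(H)\cap H_n$ (already known to be a graded Lie algebra from the Example following Definition~1.3) and the filtration $\prim(H)^{(k)}=\prim(H)\cap H^{(k)}$, and then to exhibit the map $\gr(\prim(H))\to\prim(\gr(H))$ and show it is an injective Lie algebra morphism. Throughout I write $\g=\prim(H)$.

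First I would verify that $(\g,(\g_n),(\g^{(k)}))$ is a graded filtered space. Since $H^{(k)}=\bigoplus_n H_n^{(k)}$ is compatible with the graduation of $H$, intersecting with the graded subspace $\g$ gives $\g^{(k)}=\bigoplus_n \g_n\cap H^{(k)}=\bigoplus_n \g_n^{(k)}$, which is exactly the third bullet of Definition~\ref{defi1.5}; the filtration axioms ($\g^{(0)}=\g$, decreasing) are inherited from $H$. For the bracket compatibility: if $x\in\g_m^{(k)}$ and $y\in\g_n^{(l)}$ then $[x,y]=xy-yx$ lies in $H_{m+n}$ by graded-ness of the product and in $H^{(k+l)}$ because $m(H^{(k)}\otimes H^{(l)})\subseteq H^{(k+l)}$ (Definition~\ref{defi1.7}(1)); moreover $[x,y]$ is primitive since the bracket of primitives is primitive. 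Hence $[\g_m^{(k)},\g_n^{(l)}]\subseteq\g_{m+n}^{(k+l)}$, and $\g_0=\prim(H)\cap H_0=(0)$ when $H$ is connected, so $\g$ is even connected.

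Next I would construct the comparison map. The inclusion $\iota:\g\hookrightarrow H$ is a graded filtered morphism, so by Proposition~\ref{prop1.9} it induces a graded morphism $\gr(\iota):\gr(\g)\to\gr(H)$, sending $\pi_\g^{(k)}(x)$ to $\pi_H^{(k)}(x)$ for $x\in\g^{(k)}$. I would check that its image lands in $\prim(\gr(H))$: for $x\in\g^{(k)}$ primitive in $H$, $\Delta(x)=x\otimes 1+1\otimes x$, and applying $\pi_{H\otimes H}^{(k)}$ and using the identification $\gr(H\otimes H)=\gr(H)\otimes\gr(H)$ together with $1\in H_0^{(0)}$ (so $\pi_H^{(0)}(1)$ is the unit of $\gr(H)$) gives $\Delta^{gr}(\pi_H^{(k)}(x))=\pi_H^{(k)}(x)\otimes 1+1\otimes\pi_H^{(k)}(x)$; here one must note that the cross terms of $\pi_{H\otimes H}^{(k)}$ applied to $x\otimes 1+1\otimes x$ only involve the $(0,k)$ and $(k,0)$ bidegree pieces. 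Since $\gr(\iota)$ is the restriction of a Lie algebra morphism $\gr$ of the inclusion — more precisely, $\gr$ is monoidal, so it turns the graded filtered Lie algebra $\g$ into a Lie algebra and $\gr(\iota)$ into a Lie algebra morphism — the image $\gr(\iota)(\gr(\g))$ is a Lie subalgebra of $\prim(\gr(H))$.

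The one genuinely nontrivial point — and the step I expect to be the main obstacle — is injectivity of $\gr(\iota)$. Suppose $x\in\g^{(k)}$ with $\pi_H^{(k)}(x)=0$, i.e. $x\in H^{(k+1)}$; since $x$ is also primitive, $x\in\g\cap H^{(k+1)}=\g^{(k+1)}$, so $\pi_\g^{(k)}(x)=0$ already in $\gr(\g)$. Thus $\gr(\iota)$ is injective essentially because the filtration on $\g$ is by definition the one induced from $H$; I would spell this out carefully since it is the heart of why $\gr(\g)$ embeds rather than merely maps to $\prim(\gr(H))$. This identifies $\gr(\g)$ with a Lie subalgebra of $\prim(\gr(H))$, completing the lemma.
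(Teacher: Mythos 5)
Your proposal is correct and follows essentially the same route as the paper: it equips $\prim(H)$ with the induced graded filtered Lie algebra structure, uses the inclusion into $H$ to get the canonical injection $\gr(\prim(H))\hookrightarrow\gr(H)$, checks via $\Delta^{gr}$ applied to $\pi_H^{(k)}(x)$ that the image consists of primitives, and verifies that the brackets agree. The only (harmless) differences are presentational: you justify bracket compatibility by the monoidality of $\gr$ where the paper does the explicit computation $\pi_H^{(k+l)}(xy-yx)=\pi_H^{(k)}(x)\pi_H^{(l)}(y)-\pi_H^{(l)}(y)\pi_H^{(k)}(x)$, and you spell out the injectivity that the paper records as the ``canonical injection'' coming from the induced filtration.
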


\begin{proof}
It is immediate that $(\prim(H)^{(k)})_{k\in \N}$ is a filtration of $\prim(H)$. Moreover, there is a canonical injection
\[\gr(\prim(H))=\bigoplus_{k=0}^\infty \frac{\prim(H)\cap H^{(k)}}{\prim(H)\cap H^{(k+1)}}\subseteq \bigoplus_{k=0}^\infty \frac{H^{(k)}}{H^{(k+1)}}=\gr(H).\]
Let $x\in \prim(H)\cap H^{(k)}$. Then
\begin{align*}
\Delta^{gr}\left(\pi_H^{(k)}(x)\right)&=\pi_{H\otimes H}^{(k)}(x\otimes 1_H+1_H\otimes x)\\
&=\pi_H^{(k)}(x)\otimes \pi_H^{(0)}(1_H)+\pi_H^{(0)}(1_H)\otimes \pi_H^{(k)}(x)\\
&=\pi_H^{(k)}(x)\otimes 1_{\gr(H)}+1_{\gr(H)}\otimes \pi_H^{(k)}(x),
\end{align*}
so $\pi_H^{(k)}(x)\in \prim(\gr(H))$: we obtain that $\gr(\prim(H))\subseteq \prim(\gr(H))$.\\

We denote by $\{-,-\}$ the Lie bracket of $\gr(\prim(H))$ and by $\{-,-\}'$ the Lie bracket of $\prim(\gr(H))$. 
Let $x\in \prim(H)\cap H^{(k)}$,  and $y\in \prim(H)\cap H^{(l)}$. 
\begin{align*}
\left\{\pi_H^{(k)}(x),\pi_H^{(l)}(y)\right\}&=\pi_H^{(k+l)}(xy-yx)\\
&=\pi_H^{(k+l)}(xy)-\pi_H^{(k+l)}(yx)\\
&=\pi_H^{(k)}(x)\pi_H^{(l)}(y)-\pi_H^{(l)}(y)\pi_H^{(k)}(x)\\
&=\left\{\pi_H^{(k)}(x),\pi_H^{(l)}(y)\right\}',
\end{align*}
so $\gr(\prim(H))$ is a Lie subalgebra of $\prim(\gr(H))$.
\end{proof}

\begin{prop}\label{prop3.5}
Let $\g$ be a graded filtered Lie algebra, connected and locally finite. If $\gr(\g)$ is a free Lie algebra, then $\g$ is a free Lie algebra.  
\end{prop}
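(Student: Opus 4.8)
The plan is to mirror the strategy of Proposition \ref{prop2.3} (the algebra analogue), replacing the generating-subspace criterion of Proposition \ref{prop2.2} by the Lie-algebra criterion of Corollary \ref{cor2.6}. The key is to lift a homogeneous free generating system of $\gr(\g)$ to $\g$ and check that the lifts both generate $\g$ and are algebraically (i.e. Lie-algebraically) independent, using the grading to run an induction on the filtration degree.

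First I would set up notation: write $\gr(\g)_n^{(k)}=\g_n^{(k)}/\g_n^{(k+1)}$, so that $\gr(\g)=\bigoplus_{n,k}\gr(\g)_n^{(k)}$ is bigraded and $\gr(\g)$ is connected (as $\g_0=(0)$, hence $\gr(\g)_0=(0)$). Since $\gr(\g)$ is a free Lie algebra, choose a bihomogeneous subspace $W\subseteq\gr(\g)$ with $\gr(\g)=W\oplus[\gr(\g),\gr(\g)]$; by Corollary \ref{cor2.6}, $W$ freely generates $\gr(\g)$. Pick a basis $(w_i)_{i\in I}$ of $W$ with $w_i\in\gr(\g)_{n_i}^{(k_i)}$, and lift each to $v_i\in\g_{n_i}^{(k_i)}$ with $\pi_\g^{(k_i)}(v_i)=w_i$. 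The claim is that $(v_i)_{i\in I}$ freely generates $\g$, which by Proposition \ref{prop2.4} is equivalent to saying that $(v_{i_1}\cdots v_{i_p})_{p\ge 0,\,i_j\in I}$ is a basis of $\calU(\g)$ — but I prefer to argue directly in $\g$ via the universal property, or equivalently to pass to $\calU(\g)$ and invoke Proposition \ref{prop2.3}. Actually the cleanest route: $\calU(\g)$ is graded filtered connected locally finite (the filtration of $\g$ induces the counital filtration on $\calU(\g)$), and $\gr(\calU(\g))\cong\calU(\gr(\g))$, which is free as an algebra since $\gr(\g)$ is a free Lie algebra (Proposition \ref{prop2.4}). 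Hence by Proposition \ref{prop2.3}, $\calU(\g)$ is free as an algebra, and then by Proposition \ref{prop2.4} again $\g$ is a free Lie algebra. That would be the shortest proof, but it hides where the subspace $V$ freely generating $\g$ comes from; so I would also give the direct argument.

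For the direct argument: to show generation, let $A'$ be the Lie subalgebra of $\g$ generated by the $v_i$; I would prove $\g_n^{(k)}\subseteq A'$ by a double induction — outer induction on $n$, inner decreasing induction on $k$ (which terminates since $\g_n^{(K(n))}=(0)$ by local finiteness). Given $x\in\g_n^{(k)}$, write $\pi_\g^{(k)}(x)$ in terms of $W$: it equals a sum of iterated brackets of the $w_i$ lying in $\gr(\g)_n^{(k)}$; replacing each $w_{i_j}$ by $v_{i_j}$ and taking the corresponding iterated bracket in $\g$ produces $x'\in A'$ with $x-x'\in\g_n^{(k+1)}$, and the inner induction finishes it. For independence, suppose a nontrivial Lie relation among the $v_i$ holds; expand it as a linear combination of a Hall/Lyndon basis of the free Lie algebra on symbols $x_i$, set $l$ to be the minimal total filtration degree $k_{i_1}+\cdots+k_{i_p}$ appearing with a nonzero coefficient, apply $\pi_\g^{(l)}$, and obtain a nontrivial Lie relation among the $w_i$ inside $\gr(\g)$ — contradicting freeness of $\gr(\g)$. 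Here one uses that the bracket on $\gr(\g)$ is the symbol of the bracket on $\g$ (exactly the computation in Lemma \ref{lem3.4}: $\{\pi^{(k)}(x),\pi^{(l)}(y)\}=\pi^{(k+l)}([x,y])$), so lower-order terms are killed cleanly.

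The main obstacle I anticipate is the independence step, specifically making rigorous the passage ``nontrivial relation among the $v_i$ $\Rightarrow$ nontrivial relation among the $w_i$ after applying $\pi_\g^{(l)}$.'' The subtlety is that an iterated bracket of the $v_i$ of bracket-length $p$ with total filtration degree $>l$ might, after expansion, contribute terms in filtration degree $l$ only if cancellation is mishandled; one must organize the relation by a basis of the \emph{free} Lie algebra (so that there is no hidden linear dependence among the bracket monomials themselves) before applying $\pi_\g^{(l)}$, and then use that $\pi_\g^{(l)}$ applied to a bracket monomial of total degree $>l$ vanishes while on those of total degree exactly $l$ it yields the corresponding bracket monomial in $\gr(\g)$. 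This is the Lie-algebra analogue of the monomial-basis bookkeeping in the proof of Proposition \ref{prop2.3}, and it is where the freeness of $\gr(\g)$ (hence the availability of a free basis of bracket monomials) is essential. Given the three-lines-alternative via $\calU$, I would likely present that as the proof and relegate the explicit $V$ to a remark, but the direct argument is the conceptual content.
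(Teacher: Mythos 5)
Your preferred route is exactly the paper's proof: extend the filtration of $\g$ multiplicatively to $\calU(\g)$, identify $\gr(\calU(\g))$ with $\calU(\gr(\g))$, obtain freeness of the latter as an algebra from Corollary \ref{cor2.6} and Proposition \ref{prop2.4}, descend to $\calU(\g)$ by Proposition \ref{prop2.3}, and conclude that $\g$ is free by Proposition \ref{prop2.4}. Two caveats: the relevant filtration on $\calU(\g)$ is the multiplicative extension of that of $\g$ (not the counital one, as your parenthesis suggests), and the isomorphism $\gr(\calU(\g))\cong\calU(\gr(\g))$ --- together with local finiteness of this filtration and the identity $\g^{(k)}\cap\calU(\g)^{(k+1)}=\g^{(k+1)}$ needed to embed $\gr(\g)$ into $\prim(\gr(\calU(\g)))$ via Lemma \ref{lem3.4} --- is where the paper spends essentially all of its effort, so in a complete write-up this is the step you would have to prove rather than assert.
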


\begin{proof}
For any $k,n\in \N$, we put
\[\calU(\g)_n^{(k)}=\mathrm{Vect}\left(x_1\ldots x_p\mid p\in \N,\: x_j\in \g_{n_j}^{(k_j)},\: \sum_{j=1}^p n_j=n,\:\sum_{j=1}^p k_j=k\right).\]
This makes $\calU(\g)$ a graded filtered bialgebra. Moreover, as $\g^{(0)}=(0)$, $\calU(\g)_0=\K 1_{\calU(\g)}$, $\calU(\g)$ is connected.
Let $n\in \N$. We put
\[K'(n)=\max\left\{K(j_1)+\ldots+K(j_p)\mid p\in \N,\:j_1,\ldots,j_p\geq 1, \:j_1+\ldots+j_p=n\right\}.\]
Let $x_1\ldots x_p$ be a generator of $\calU(\g)^{(K'(n))}$, with $x_i\in \g_{n_j}^{(k_j)}$. If there exists $j$ such that $n_j=0$, 
then as $\g$ is connected, $x_j=0$. Otherwise, there exists $j$ such that $k_j\geq K(n_j)$, by definition of $K'(n)$.
Then $x_j=0$. Hence, $\calU(\g)_n^{(K'(n))}=(0)$: $\calU(\g)$ is a locally finite connected graded filtered bialgebra. \\

Let us prove that for any $k$, $\g^{(k)}\cap \calU(\g)^{(k+1)}=\g^{(k+1)}$. The inclusion $\supseteq$ is obvious. 
Let us consider $x\in \calU(\g)^{(k+1)}\cap \g^{(k)}$. As $x\in \calU(\g)^{(k+1)}$, $x$ is a noncommutative polynomial in
elements $x_1,\ldots,x_p\in \g$, with complementary conditions about the filtration that we leave to the reader. 
As it belongs to $\g$, it is a Lie polynomial, that is to say a span of iterated brackets of elements of $\g$.
The condition about the filtration and the fact that the bracket respects the filtration gives that $x\in \g^{(k+1)}$.\\

By Lemma \ref{lem3.4} applied to the bialgebra $\calU(\g)$, as $\prim(\calU(\g))=\g$, we obtain that $\gr(\g)$ is a Lie subalgebra of $\prim(\gr(\calU(\g)))$.
This induces a bialgebra morphism
\[\Phi:\calU(\gr(\g))\longrightarrow \gr(\calU(\g)),\]
which is the identity on $\gr(\g)$. Let us assume that it is not injective, and let us consider $x\in \ker(\Phi)$, of minimal degree $n$.
By minimality of $n$, $x$ is necessarily primitive, so belongs to $\prim(\calU(\gr(\g)))=\gr(\g)$, which implies that $\Phi(x)=x=0$:
this is a contradiction. So $\Phi$ is injective. Let us now prove that $\Phi$ is surjective: we consider $x\in \gr(\calU(\g))_n$
and proves that it belongs to $\im(\Phi)$ by induction on $n$. If $n=0$, then $x=\lambda 1_{\gr(\calU(\g))}$ for some $\lambda\in \K$,
so belongs to $\im(\Phi)$. Let us assume the results at all ranks $<n$. Let $x\in \gr(\calU(\g))_n^{(k)}$, we proceed
by decreasing induction on $k$. If $k\geq K(n)$, then $x=0$. Let us assume the result at all ranks $>k$. 
We can restrict ourselves to $x=\pi_{\calU(\g)}^{(k)}(x_1\ldots x_p)$, with $x_i\in \g_{n_i}^{(k_i)}$, $n_1+\ldots+n_p=n$,
$k_1+\ldots+k_p=p$. Then
\[x-\underbrace{\pi_{\calU(\g)}^{(k_1)}(x_1)\ldots \pi_{\calU(\g)}^{(k_p)}(x_p)}_{\in \im(\Phi)}\in \gr(\calU(\g))_n^{(k+1)}.\]
By the induction hypothesis on $k$, $x\in \im(\Phi)$. 
We finally proved that the bialgebras $\gr(\calU(\g))$ and $\calU(\gr(\g))$ are isomorphic. By hypothesis, $\gr(\calU(\g))$
is a free graded algebra.\\

Let $V$ be a graded subspace of of $\gr(\g)$ such that $\gr(\g)=V\oplus [\gr(\g),\gr(\g)]$. By Corollary \ref{cor2.6}, $V$ freely generates
$\gr(\calU(\g))$. By Proposition \ref{prop2.3}, $\calU(\g)$ is a graded free algebra.  By proposition \ref{prop2.4}, $\g$ is a graded free Lie algebra. \end{proof}

\section{Main result}

\begin{theo}\label{theo4.1}
Let $H$ be a graded connected bialgebra, free as an algebra. Then the Lie algebra $\prim(H)$ is free.
\end{theo}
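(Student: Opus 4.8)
The plan is to assemble the proof of Theorem \ref{theo4.1} directly from the machinery developed in Sections 1--3, with essentially no new computation. Equip $H$ with its counital filtration $H^{(k)}=\ker(\varepsilon)^k$; by Proposition \ref{prop3.1}, $H$ becomes a graded filtered locally finite bialgebra, so the functor $\gr$ produces a graded bialgebra $\gr(H)$, with product $m^{gr}$ and coproduct $\Delta^{gr}$.

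The first step is to identify $\gr(H)$ precisely. By Proposition \ref{prop3.2}, since $H$ is free as an algebra, $(\gr(H),m^{gr})$ is isomorphic to $(H,m)$ as a graded algebra, hence is itself a free graded algebra. By Proposition \ref{prop3.3}, $\gr(H)$ is cocommutative. It is also graded and connected (connectedness of $\gr(H)_0$ follows from connectedness of $H$ and the fact that $H_0=H_0^{(0)}$ is killed by $\pi^{(1)}$ in higher filtration degree). Now apply Cartier--Quillen--Milnor--Moore (Theorem 1.4): $\gr(H)\cong \calU(\prim(\gr(H)))$ as graded bialgebras. Combining this with the freeness of $\gr(H)$ as an algebra and Proposition \ref{prop2.4}, the Lie algebra $\prim(\gr(H))$ is a free Lie algebra.

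The second step is to pass from $\prim(\gr(H))$ back to $\prim(H)$ via the filtration. The Lie algebra $\g:=\prim(H)$ is a graded Lie algebra (Example 1.2) which is connected since $H$ is connected, and it inherits a filtration $\g^{(k)}=\g\cap H^{(k)}$ making it a graded filtered Lie algebra; local finiteness of $\g$ follows from local finiteness of $H$ since $\g_n^{(k)}\subseteq H_n^{(k)}$. By Lemma \ref{lem3.4}, $\gr(\g)=\gr(\prim(H))$ is a Lie subalgebra of $\prim(\gr(H))$. By Shirshov--Witt's theorem, a Lie subalgebra of a free Lie algebra is free, so $\gr(\prim(H))$ is a free Lie algebra. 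Finally, apply Proposition \ref{prop3.5} to the graded filtered, connected, locally finite Lie algebra $\g=\prim(H)$: since $\gr(\g)$ is free, $\g=\prim(H)$ is a free Lie algebra.

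I do not anticipate a genuine obstacle here, since every ingredient has been prepared; the one point that needs a line of care is checking the hypotheses of the cited results line up — in particular that $\gr(H)$ is connected (so that CQMM applies) and that $\prim(H)$ with its induced filtration is locally finite and connected (so that Proposition \ref{prop3.5} applies). Both reduce to the observation that the counital filtration places $H_n$ (and hence $\g_n$) inside $H^{(1)}$ for $n\geq 1$ while $H_0=\K 1_H$ sits in $H^{(0)}$, together with the local finiteness already recorded in Proposition \ref{prop3.1}. If one wanted to be maximally careful, the only slightly delicate verification is that the Lie subalgebra inclusion of Lemma \ref{lem3.4} is compatible with gradings so that Shirshov--Witt may be invoked on the nose; but since freeness of a Lie algebra is not a graded notion, no compatibility beyond being a Lie subalgebra is actually needed.
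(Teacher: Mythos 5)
Your proposal is correct and follows essentially the same route as the paper: counital filtration, freeness and cocommutativity of $\gr(H)$ (Propositions \ref{prop3.2} and \ref{prop3.3}), freeness of $\prim(\gr(H))$ via Cartier--Quillen--Milnor--Moore and Proposition \ref{prop2.4}, then Lemma \ref{lem3.4}, Shirshov--Witt, and Proposition \ref{prop3.5}. The only compressed point is the application of Proposition \ref{prop2.4}, which requires a free generating subspace sitting \emph{inside} $\prim(\gr(H))$; as in the paper's cocommutative case, this is supplied by choosing a graded $V\subseteq \prim(\gr(H))$ with $\ker(\varepsilon)=V\oplus\ker(\varepsilon)^2$ and invoking Proposition \ref{prop2.2}.
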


\begin{proof}
\textit{First case}. Let us assume that $H$ is cocommutative. By Cartier--Quillen--Minor--Moore's theorem, $H$ is isomorphic to $\calU(\prim(H))$, so is generated by its primitive elements.
Hence, there exists a graded subspace $V$ of $\prim(H)$, such that $\ker(\varepsilon)=V\oplus \ker(\varepsilon)^2$. As a consequence, $V$ freely generates $H$ by Proposition \ref{prop2.2}. 
By Proposition \ref{prop2.4}, $V$ freely generates the Lie algebra $\prim(H)$. \\

\textit{General case}. We give $H$ its counital filtration $(H^{(n)})_{n\in \N}$, see Proposition \ref{prop3.1}. Then $\gr(H)$ is also free by Proposition \ref{prop3.2}.
Moreover, it is cocommutative by Proposition \ref{prop3.3}: by the first case, $\prim(\gr(H))$ is a free Lie algebra. 

For any $k\in \N$, we put $\prim(H)^{(k)}=\prim(H)\cap H^{(k)}$. Proposition \ref{prop3.1} implies that this defines a locally finite filtration of $\prim(H)$,
and by Lemma \ref{lem3.4}, $\gr(\prim(H))$ is a Lie subalgebra of $\prim(\gr(H))$.  The latter is a free Lie algebra. By Shirshov and Witt's theorem \cite{Shirshov53,Witt56}, 
$\gr(\prim(H))$ is free. By Proposition \ref{prop3.5}, $\prim(H)$ is free.  \end{proof}

\bibliographystyle{amsplain}
\bibliography{biblio}

\end{document}